\documentclass[11pt]{amsart}
\usepackage[margin=1.12in]{geometry}
\usepackage{amsmath,amssymb,amsthm,mathtools,booktabs,array,longtable}
\usepackage{microtype}
\usepackage{xurl}
\usepackage[hidelinks]{hyperref}
\hypersetup{pdftitle={An improved result on Chern's conjecture}}
\numberwithin{equation}{section}
\newtheorem{theorem}{Theorem}[section]

\newtheorem{lemma}[theorem]{Lemma}

\theoremstyle{remark}

\newcommand{\tr}{\operatorname{tr}}

\allowdisplaybreaks[2]
\newcommand{\one}{\mathbf 1}
\DeclareMathOperator{\diag}{diag}
\title[On Chern's conjecture]{An improved result on Chern conjecture*}
\author [Q.-M. Cheng, F. Li, G. Wei and J. Yin]{Qing-Ming Cheng, Fengjiang Li,  Guoxin Wei and Jiabin Yin}
\address{Qing-Ming Cheng \newline
\indent
Mathematical Science Research Center, \newline 
\indent Chongqing University of Technology, Chongqing 400054, P. R. China. 
\vskip 1mm
\indent
Osaka Center Advanced Mathematical Institute \newline
\indent
Osaka Metropolitan University, Osaka 558-8585, Japan. \newline
\indent qingmingcheng@yahoo.com, chengqingming@cqut.edu.cn }
\address{Fengjiang Li \newline
\indent
Mathematical Science Research Center, \newline 
\indent Chongqing University of Technology, Chongqing 400054, P. R. China. \newline 
\indent fengjiangli@cqut.edu.cn }

\address{Guoxin Wei \newline
\indent   School of Mathematical Sciences, \newline 
\indent  South China Normal University, Guangzhou 510631, P. R. China. \newline 
\indent weiguoxin@tsinghua.org.cn}

\address{Jiabin Yin \newline
\indent 	School of Mathematics and Statistics, \newline
\indent 	Xinyang Normal University, Xinyang 464000,
	People's Republic of China \newline
\indent jiabinyin@126.com}

\thanks{2020 Mathematics Subject Classification. Primary 53C40; Secondary 53C42}
\thanks{*This work was supported by  National Natural Science Foundation of China (Grant Nos.
12301062, 12571050, 12671065, 12201138), Natural Science Foundation of Chongqing
(No.CSTB2024NSCQ-MSX0537), Japan Society for the Promotion of Science Grant-in-Aid
for Scientific Research (C) (Grant No. 25K06992) and NSF of Henan Province (Grant No. 262300421869).} 
\date{}
\begin{document}
\begin{abstract}
Let $X: M^n\to \mathbb S^{n+1}(1)$, $n\ge4$, be a complete minimal hypersurface with constant scalar curvature. 
Chern conjecture says that  a closed minimal hypersurface with constant scalar curvature, $S\geq 2n$ if $S>n$,
where $S$ is the squared norm of the second fundamental form.
As a partial result, Peng-Terng  \cite{pt1, pt2}, Yang-Cheng \cite{yc1, yc2, yc3}
and Suh-Yang \cite{sy} obtained that if $S>n$, then $S>n+ \dfrac{3}7n$. Recently, Chen \cite{c} has proved  $S>n+\frac{26}{59}n$ 
if $S>n$. Since resolving Chern conjecture is a hard problem, as a midway for resolving Chern conjecture, one 
wants to prove $S>n+\frac12 n$ if $S>n$.  In this paper,  we give a positive solution for this midway problem. 
We should remark that our assumption is completeness, which is weaker than closedness in Chen \cite{c}.
\end{abstract}
\maketitle

\section{Introduction}
Let $X:M^n\to\mathbb S^{n+1}(1)$ be a  connected minimal hypersurface. We 
denote the shape operator  and the second fundamental form by $\mathbf A$ 
and $\vec h$ of $X:M^n\to\mathbb S^{n+1}(1)$, respectively.
In this paper, hypersurfaces that we consider are connected and immersed if there is  no an obvious mention. 
We define $S$ and $f_k$ by 
\[
 S=|\vec h|^2=\operatorname{tr}(\mathbf A^2),\qquad f_k=\operatorname{tr}(\mathbf A^k).
\]
According to the Gauss equations, it is well-known that, for a minimal  hypersurface $X:M^n\to\mathbb S^{n+1}(1)$, 
the scalar curvature is constant if and only if $S$ is constant. 
In view of the equation of Simons, it is easy to prove that for a closed minimal hypersurface $X:M^n\to\mathbb S^{n+1}(1)$  in the unit sphere,
$S\equiv 0 $ or $S\equiv n$ if $S\leq n$. Furthermore,  Chern-do Carmo-Kobayashi \cite{cdk} and Lawson \cite{l}
proved that only Clifford  torus  is the  minimal hypersurfaces $X:M^n\to\mathbb S^{n+1}(1)$  in the unit sphere with $S=n$.  
The following Chern 
conjecture is famous:

\vskip2mm
\noindent
{\bf Chern  conjecture}.
For $n$-dimensional closed  minimal  hypersurfaces $X:M^n\to\mathbb S^{n+1}(1)$  in the unit sphere
with constant scalar curvature,  if $S>n$, then  $S\geq 2n$.
\vskip2mm
\noindent
Many authors (see \cite{s, pt1, pt2, cheng96, yc1, yc2, yc3, sy, gt, ty, c}) made important contributions 
toward resolving the Chern conjecture.  
In particular,  Chen \cite{c} has recently proved  $S>n+\frac{26}{59}n$ if $S>n$. As a midway for resolving Chern conjecture, one 
wants to prove $S>n+\frac12 n$ if $S>n$. \par

Our main purpose in this paper is to realize this midway.  We obtain
the following:

\begin{theorem}\label{thm:main}
Let $X:M^n\to\mathbb S^{n+1}(1)$, $n\ge4$, be an $n$-dimensional  complete 
minimal hypersurface with constant scalar curvature.
If $S>n$, then $S> n+\frac{1}{2}n$.
\end{theorem}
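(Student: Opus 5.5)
The plan follows the Peng--Terng / Yang--Cheng / Chen line of argument: derive integral-type (or, in the complete case, Omori--Yau type) identities from the Simons equation and its higher-order analogues, and then squeeze out the constant $\tfrac12$ by sharper pointwise algebraic estimates. Since $M$ is only complete, I replace integration by the generalized maximum principle of Omori--Yau. This requires bounded curvature, which holds because $S$ is constant and the Gauss equation then bounds the sectional curvature. Applied to the bounded functions $f_3$ and $f_4$, it gives sequences $\{p_m\}$ along which $f_3\to\sup f_3$ (or $\inf f_3$), $|\nabla f_3|\to0$ and $\limsup\Delta f_3\le 0$. All pointwise quantities (the $h_{ij}$, $h_{ijk}$, $h_{ijkl}$) are bounded, because $S$ constant forces $|\nabla h|^2=S(S-n)$, and a standard argument then bounds $|\nabla^2 h|$. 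So we may pass to limits of principal curvatures $\lambda_i$ and of the derivative tensors at $p_m$.

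\textbf{Step 1.} Since $S$ is constant, the Simons identity gives
\[
\tfrac12\Delta S=|\nabla h|^2+S(n-S)=0,\qquad |\nabla h|^2=S(S-n).
\]
The next-order identity for $\tfrac12\Delta|\nabla h|^2=0$ gives
\[
|\nabla^2 h|^2=(S-n)\bigl(\ldots\bigr)+3\bigl(A-2B\bigr),
\]
with $A=\sum h_{ijk}^2\lambda_i^2$ and $B=\sum h_{ijk}^2\lambda_i\lambda_j$. I also need the formula for $\Delta f_3$:
\[
\tfrac13\Delta f_3=(n-S)f_3+2\sum\lambda_i h_{ijk}^2 .
\]

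\textbf{Step 2.} Fix $S$ with $n<S\le \tfrac32 n$ and aim for a contradiction. Work at an Omori--Yau sequence for $f_3$, where in the limit $\nabla f_3=0$, i.e.\ $\sum\lambda_i^2h_{iik}=0$ for every $k$. Together with $\sum_i h_{iik}=0$ (minimality) and $\sum_i\lambda_i h_{iik}=0$ ($S$ constant), this constrains the $h_{ijk}$. Now use the Peng--Terng type inequality
\[
|\nabla^2h|^2\ \ge\ \tfrac{3}{2}\,\frac{\bigl(S(S-n)\bigr)^2}{\ldots}\cdots
\]
obtained by decomposing $h_{ijkl}$ into its symmetric part plus commutator terms $\tfrac12(\lambda_i-\lambda_j)(1+\lambda_i\lambda_j)h_{ij}$-type corrections. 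This bounds $3(A-2B)$ from below. The key new ingredient has to be a sharper upper bound for $A-2B$ in terms of $S$, $f_3$, $f_4$ and $|\nabla h|^2$. I would try to obtain it by splitting $\sum h_{ijk}^2(\ldots)$ into the cases of three distinct, two equal, and all equal indices. Each group is then estimated with the three linear constraints above via Lagrange multipliers, as Chen did, but keeping the joint term in $f_3^2$ rather than bounding it crudely by $f_4\le S^2$-type estimates.

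\textbf{Step 3.} Combine the lower bound from $\Delta f_3$ and $\Delta f_4$ (using $\limsup\Delta f_3\le0$ at a sup-sequence and $\liminf\ge0$ at an inf-sequence) with the upper bound from Step 2. This yields a polynomial inequality $P(S,n)\ge0$ valid for all $n\ge4$. One then checks that $P<0$ on $(n,\tfrac32n]$, giving the contradiction and hence $S>\tfrac32 n$.

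The main obstacle is the sharp pointwise estimate in Step 2. The earlier constants $\tfrac37$ and $\tfrac{26}{59}$ come precisely from how lossy this algebraic optimization is. Reaching $\tfrac12$ likely requires treating $f_3$ and $f_4$ simultaneously: Newton/Cauchy-type bounds such as $f_4\le \tfrac{n^2-3n+3}{n(n-1)}S^2$ and $f_3^2\le\frac{n-2}{\sqrt{n(n-1)}}S^{3}$-type bounds must be sharpened using that both $\sup$ and $\inf$ sequences are available. I would first try a case split on whether $\sup f_3$ or $|\inf f_3|$ is small, using Step 1 identities in each case.
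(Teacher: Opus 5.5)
Your proposal has a genuine gap: it never supplies the estimate that produces the constant $\frac12$. Step 2 contains placeholders (``$\ge \frac32\frac{(S(S-n))^2}{\ldots}\cdots$'') and says that ``the key new ingredient has to be a sharper upper bound for $A-2B$'' without giving one. Step 3 then asserts a polynomial inequality $P(S,n)\ge 0$ that is never derived.

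Beyond this, the framework as stated cannot work. An Omori--Yau sequence for $f_3$ gives, at its limiting configuration, only $D_k=\sum_i\lambda_i^2h_{iik}=0$ and $2C\le tSf_3$. It says nothing about $2A+B$. In the Peng--Terng/Yang--Cheng scheme you are following, the upper bound on $A-2B$ comes from $3(A-2B)=4(2A+B)-5(A+2B)$. You bound $2A+B$ from above via a Laplacian, and $A+2B$ from below algebraically; the paper's Lemma 2.2 gives $A+2B\ge \frac{3C^2}{tS^2}+4\sum_l D_l^2/(S+\frac{2n}{n+2}\lambda_l^2)$. Your remedy is to combine $\limsup\Delta f_3\le0$ on one sequence with information on $\Delta f_4$, or on an inf-sequence, from another sequence. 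That is not legitimate on a complete manifold. Those inequalities hold at different limiting values of $\lambda_i,h_{ijk},h_{ijkl}$, so they cannot be merged into one pointwise algebraic inequality, as integration would allow in the closed case.

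The paper gets around this by applying the maximum principle to a single function $F=\frac S4f_4-\frac7{30}f_3^2$. At one limiting configuration it then has both of the following at once:
\begin{itemize}
\item the bound $S(2A+B)\le tS^2f_4-\frac75tSf_3^2+\frac{14}5f_3C+\frac{21}5\sum_lD_l^2$;
\item the gradient constraint $\sum_i(S\lambda_i^3-\frac75f_3\lambda_i^2)h_{iik}=0$.
\end{itemize}
The gradient constraint is used through the projection $\vec\varphi_*$ (Lemmas 2.1 and 2.3) to control the $D$-terms, which no longer vanish. These are combined with a Cauchy--Schwarz lower bound for $\sum u_{ijkl}^2$ against the tensors $X,Y,Z,V(b)$, which carries a free parameter $u$. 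The constants are then closed by:
\begin{itemize}
\item choosing $u=\frac12$ or $u=\frac14$;
\item reducing to $t=\frac13$ by monotonicity in $t$;
\item splitting into the cases $n\le12$ and $n\ge13$, using $2Sf+\frac{n-1}{n-2}f_3^2\le(1-\frac2n)S^3$.
\end{itemize}
None of this structure appears in your plan, and in particular nothing replaces the single combined test function. As written, the plan does not prove the statement.
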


Our proof uses the function introduced in the work of Yang and Cheng \cite{yc3},
with a fixed coefficient. By applying the generalized  maximum principle to this function, its 
gradient  gives an additional relation for the covariant derivative
of $\vec h$. We use this relation in both derivative estimates, 
which  is a new idea in resolving this kind of problem.
The tensor decompositions and their contractions follow the approach
of Yang and Cheng \cite{yc3}.
The method is also inspired by the recent work of Hu, Sun and Yin
\cite{hsy}. 
In Section~\ref{sec:preliminaries}, we give the notation, basic formulas
and lemmas needed in the proof. In Section 3, we prove main estimates for our proof. In Section 4,  we prove
the theorem~\ref{thm:main}.

\section{Notation and lemmas}\label{sec:preliminaries}

In this paper, we will use the notation,  symbols  and  the basic formulas in  Yang-Cheng \cite{yc3}.
Throughout Sections 2 and 3, we assume that $n\ge4$ and $S>n$, and put
\[
 t=\frac{S-n}{S}>0.
\]
At a fixed point, choose an orthonormal frame $\{e_1, \cdots, e_n\}$ such that components $h_{ij}$ of the second fundamental form $\vec h$
satisfy $h_{ij}=\lambda_i\delta_{ij}$, where $\lambda_i$'s denote the  principal curvatures.  
Let $h_{ijk}$ and $h_{ijkl}$ denote  the first and second covariant derivatives of the second
fundamental form. 
The Gauss equations, Codazzi equations and Ricci identities are
\begin{equation}
\begin{aligned}
& R_{ijkl}=\delta_{ik}\delta_{jl}-\delta_{il}\delta_{jk}
       +h_{ik}h_{jl}-h_{il}h_{jk},\\
     &  h_{ijk}=h_{ikj}, \\
 &h_{ijkl}-h_{ijlk}=\sum_a h_{ia}R_{ajkl}+\sum_a h_{aj}R_{aikl}.
\end{aligned}
\end{equation}
Since  $X:M^n\to\mathbb S^{n+1}(1)$ is minimal and  $S$ is constant, we have  
\[
 \sum_i\lambda_i=0,\qquad \sum_i h_{iik}=0,
 \qquad \sum_i\lambda_i h_{iik}=0.
\]
By making use of the notation and symbols $a_{ij}$ in \cite{yc3},  we obtain
$$
a_{ij}=(\lambda_i^2-\dfrac{f_3}{S}\lambda_i-\dfrac Sn)\delta_{ij}, \quad  b_{ij}=(\lambda_i^3-\dfrac75\dfrac{f_3}{S}\lambda_i^2)\delta_{ij}
\ \ \ \ i, j=1,2,\cdots, n
$$
and 
\[
\sum_ia_{ii}=0, \quad \sum_{i,j}a_{ij}h_{ij}=0, \quad  \sum_{i,j}a_{ij}^2=f_4-\frac{f_3^2}{S}-\frac{S^2}{n}=f.
\]
Letting
\[
{\bf 1}=(1, \cdots, 1)^T, \quad  {\vec\varphi}=\left(\frac{a_{11}}S,\cdots,\frac{a_{nn}}S\right)^T,
 \qquad {\vec q}=(b_{11},\cdots,b_{nn})^T,
\]
we denote by $P$ the orthogonal projection onto
$\operatorname{span}\{\one,(\lambda_1,\cdots,\lambda_n)^T\}^{\perp}$ and put
\begin{equation}\label{eq:projection-definition}
 \vec\psi=P\vec q,\qquad
 \vec\varphi_*=
 \begin{cases}
 \displaystyle\vec\varphi-
 \frac{\langle\vec\varphi,\vec\psi\rangle}{|\vec\psi|^2}\vec\psi,
       &\vec\psi\ne0,\\[5pt]
 \vec\varphi,&\vec\psi=0.
 \end{cases}
\end{equation}
Since $\vec\varphi\perp\one,(\lambda_1,\ldots,\lambda_n)^T$, the vector
$\vec\varphi_*$ is the orthogonal projection of $\vec\varphi$ onto
\[
 \operatorname{span}\{\one,(\lambda_1,\ldots,\lambda_n)^T,\vec q\}^{\perp}.
\]
We have
\begin{equation}\label{eq:2.2}
 |\vec\varphi|^2=\frac{f}{S^2},\qquad
 0\le |\vec\varphi_*|^2\le |\vec\varphi|^2.
\end{equation}

\begin{lemma} \label{lem:2.1} For a minimal hypersurface in the unit sphere with $n\ge4$ and $S>0$, we have 
\begin{equation}\label{eq:2.3}
2Sf+\dfrac{n-1}{n-2}f^2_3\leq (1-\dfrac2n)S^3.
\end{equation}
If
\begin{equation*}
2Sf+(\dfrac{n-1}{n-2}+\dfrac19)f^2_3> (1-\dfrac2n)S^3.
\end{equation*}
then 
 \begin{equation}\label{eq:2.4}
|{\vec \varphi_{*}}|^2\le\frac59\dfrac{f}{S^2}.
\end{equation}
\end{lemma}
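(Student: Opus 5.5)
The plan is to treat both parts as algebraic statements about a vector $\lambda=(\lambda_1,\dots,\lambda_n)$ on the compact set $\Sigma=\{\sum_i\lambda_i=0,\ \sum_i\lambda_i^2=S\}$. The geometry enters only through minimality and the value of $S$. By homogeneity we may normalize $S=1$.

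For \eqref{eq:2.3}, substitute $f=f_4-\frac{f_3^2}{S}-\frac{S^2}{n}$. The inequality becomes
\[
F(\lambda):=2Sf_4-\frac{n-3}{n-2}f_3^2\le S^3 \quad\text{on }\Sigma .
\]
We maximize $F$ on $\Sigma$ by Lagrange multipliers. At a critical point,
\[
8S\lambda_i^3-6\,\tfrac{n-3}{n-2}\,f_3\,\lambda_i^2=\mu+\nu\lambda_i\qquad\text{for every } i,
\]
so each $\lambda_i$ is a root of one fixed cubic. Hence a maximizer takes at most three distinct values, with multiplicities $p,q,r$, $p+q+r=n$.

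We first treat the two-valued case. Here $\lambda_i^2$ lies in $\operatorname{span}\{\one,\lambda\}$, so $a_{ii}\equiv0$ and $f=0$. Thus \eqref{eq:2.3} reduces to Okumura's bound $f_3^2\le\frac{(n-2)^2}{n(n-1)}S^3$, with equality for the $(n-1,1)$ configuration. This also explains the constant $\frac{n-1}{n-2}$.

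In the three-valued case we parametrize by two variables, using the two constraints. We then check $F\le S^3$ directly, for instance by writing $S^3-F$ as a sum of squares in the differences $\lambda_i-\lambda_j$. This uses $n\ge4$. It is routine but tedious bookkeeping. A more conceptual alternative is a Cauchy--Schwarz argument applied to $(\lambda_i^2-\frac{f_3}{S}\lambda_i-\frac Sn)$ paired with $\lambda_i$-weights, after isolating the largest $|\lambda_i|$.

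For \eqref{eq:2.4}, set
\[
D:=(1-\tfrac2n)S^3-2Sf-\tfrac{n-1}{n-2}f_3^2\ge0 .
\]
The hypothesis says $D<\frac19 f_3^2$, i.e.\ $\lambda$ is close to an extremal configuration of \eqref{eq:2.3}. Since $|\vec\varphi_*|^2=\min_s|\vec\varphi-s\vec\psi|^2$, it suffices to exhibit one explicit $s$ with $|\vec\varphi-s\vec\psi|^2\le\frac59\frac{f}{S^2}$. The natural choice is $s$ proportional to $S/f_3$, motivated by the leading-order identity $\lambda_i^3\approx\frac{f_3}{S}\lambda_i^2+\dots$ near extremal configurations. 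This choice also explains the coefficient $\frac75$ in $b_{ij}$: it is tuned so that the cross terms cancel.

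Expanding $|\vec\varphi-sP\vec q|^2$ gives a polynomial in $f_3,f_4,f_5,f_6$ and $S$, with $f_5$ and $f_6$ entering through $\langle\vec q,\vec q\rangle$ and $\langle\vec\varphi,\vec q\rangle$. The aim is the inequality
\[
S^2|\vec\varphi-s\vec\psi|^2-\tfrac59 f\le C\bigl(D-\tfrac19 f_3^2\bigr)
\]
for some $C\ge0$. Given that inequality, the hypothesis makes the left side negative. An equivalent route is to show directly that $\langle\vec\varphi,\vec\psi\rangle^2\ge\frac49|\vec\varphi|^2|\vec\psi|^2$.

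The main obstacle is controlling the higher power sums $f_5,f_6$, since $D$ involves only $f_3,f_4$. I would first try the same Lagrange reduction to at most three (or four) distinct values, which makes everything explicit. Failing that, I would bound $f_6$ and $f_5$ by interpolation, e.g.\ $f_5^2\le f_4f_6$ and $f_6\le \max_i\lambda_i^2\, f_4$, with $\max\lambda_i^2$ controlled by Okumura-type estimates.
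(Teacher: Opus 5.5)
Your reduction of \eqref{eq:2.3} to $2Sf_4-\frac{n-3}{n-2}f_3^2\le S^3$ is correct, and so are the Lagrange argument (a maximizer has at most three distinct $\lambda_i$) and the two-valued case ($f=0$, which is Okumura's inequality). But the three-valued case carries the whole content of \eqref{eq:2.3}, and you give no argument for it. No sum-of-squares expression is written down, and the check is an infinite family indexed by $n$ and the multiplicities $(p,q,r)$, not a routine finite computation.

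The idea you are missing is the paper's, and it needs no case analysis. Write $\Lambda=\diag(\lambda_i/\sqrt S)$ in the orthonormal basis $e_1=\one/\sqrt n$, $e_2=(\lambda_i/\sqrt S)$, $e_3=(a_{ii}/\sqrt f)$ (for $f>0$), together with $\mathcal H=\{e_1,e_2,e_3\}^\perp$. Since $\lambda_i^2/S=\frac1n+\frac{f_3}{S^2}\lambda_i+\frac{a_{ii}}{S}$, the first two columns are explicit: $\Lambda e_1=e_2/\sqrt n$ and $\Lambda e_2=e_1/\sqrt n+\frac{f_3}{S^{3/2}}e_2+\frac{\sqrt f}{S}e_3$. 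The only unknowns are $a=\langle\Lambda e_3,e_3\rangle$, the $\mathcal H$-component $\vec b$ of $\Lambda e_3$, and the block $C$. Then $\tr\Lambda=0$, $|\Lambda|^2=1$ and $|C|^2\ge(\tr C)^2/(n-3)$ give the exact identity
\[
\Bigl(1-\frac2n\Bigr)S^3-2Sf-\frac{n-1}{n-2}f_3^2=\frac{n-2}{n-3}\Bigl(S^{3/2}a+\frac{f_3}{n-2}\Bigr)^2+2S^3|\vec b|^2+S^3|C_0|^2,
\]
where $C_0$ is the trace-free part of $C$. This is \eqref{eq:2.3} with its deficit written as a sum of squares.

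For \eqref{eq:2.4} you correctly restate the goal as $\langle\vec\varphi,\vec\psi\rangle^2\ge\frac49|\vec\varphi|^2|\vec\psi|^2$. However, the key inequality is only announced, and the obstacle you name (controlling $f_5,f_6$) is left open. The same basis removes it. Since $\lambda_i^3=S^{3/2}(\Lambda\Lambda e_2)_i$ and $\Lambda e_2\in\operatorname{span}\{e_1,e_2,e_3\}$, one finds $\vec\psi=\sqrt{Sf}\,[(a-\frac25\tau)e_3+\vec b]$ with $\tau=f_3/S^{3/2}$. Hence $|\vec\varphi_*|^2=\frac{f}{S^2}\,|\vec b|^2/\bigl((a-\frac25\tau)^2+|\vec b|^2\bigr)$, so $f_5,f_6$ enter only through $a$ and $|\vec b|$, exactly the quantities controlled by the deficit above.

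The hypothesis gives $\frac19f_3^2>(S^{3/2}a+\frac{f_3}{n-2})^2+2S^3|\vec b|^2$, which forces $f_3\neq0$. On the other hand, $|\vec\varphi_*|^2\ge\frac59\frac f{S^2}$ would mean $|\vec b|^2\ge\frac54(a-\frac25\tau)^2$. That makes the right side at least $\frac57(\frac1{n-2}+\frac25)^2f_3^2\ge\frac4{35}f_3^2>\frac19 f_3^2$, a contradiction.

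The margin here tends to only $\frac4{35}-\frac19=\frac1{315}$ as $n\to\infty$. Your fallback bounds $f_5^2\le f_4f_6$ and $f_6\le\max_i\lambda_i^2f_4$ do not see the specific combinations $a$ and $|\vec b|^2$, and have no realistic chance of closing such a gap. A Lagrange reduction is also awkward for an implication whose target is a ratio involving $f_5,f_6$.

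Finally, $\frac75$ is not tuned to cancel cross terms. It is inherited from the test function $F$ of Section~3, and any coefficient $c$ in its place with $\frac57(\frac1{n-2}+c-1)^2\ge\frac19$ for all $n\ge4$ would serve equally well.
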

\begin{proof}
Let $\Lambda=\diag(\dfrac{\lambda_1}{\sqrt S},\ldots,\dfrac{\lambda_n}{\sqrt S})$ and put
\[
 e_1=\frac1{\sqrt n}(1,\ldots,1)^T,\qquad
 e_2=\left(\frac{\lambda_1}{\sqrt S},\ldots,\frac{\lambda_n}{\sqrt S}\right)^T.
\]
Assume first that $f>0$. We define
\[
 e_3=\left(\frac{a_{11}}{\sqrt f},\ldots,\frac{a_{nn}}{\sqrt f}\right)^T.
\]
$e_1, \ e_2, \ e_3$ are orthonormal.
Since
\begin{equation}\label{eq:2.5}
 \Lambda e_1=\frac{e_2}{\sqrt n},\qquad
 \Lambda e_2=\frac{e_1}{\sqrt n}+\dfrac{f_3}{S^{3/2}}e_2+\dfrac{\sqrt f}S\,e_3, \qquad
 \Lambda e_3=\dfrac{\sqrt f}S\,e_2+ae_3+\vec b,
\end{equation}
where $\vec b\in \mathcal H=\{e_1, e_2, e_3\}^{\perp}$, 
the matrix of $\Lambda$ relative to $e_1, e_2, e_3$  and their
orthogonal complement $\mathcal H$ has the form
\[
 \Lambda=
 \begin{pmatrix}
 0&\frac{1}{\sqrt n}&0&0\\
 \frac{1}{\sqrt n}&\dfrac{f_3}{S^{3/2}}&\dfrac{\sqrt f}S&0\\
 0&\dfrac{\sqrt f}S&a&\vec b^{T}\\
 0&0&\vec b&C
 \end{pmatrix}.
\]
Here $C$ is symmetric, and $\dim\mathcal H=n-3$.
From $\tr\Lambda=0$ and $|\Lambda|^2=1$, we have
\begin{equation}\label{eq:2.6}
 \tr C=-\dfrac{f_3}{S^{3/2}}-a,\qquad
 1=\frac{2}{n}+\dfrac{f_3^2}{S^3}+2\dfrac{ f}{S^2}+a^2+2|\vec b|^2+|C|^2.
\end{equation}
Setting  $C_0=C+\frac{\frac{f_3}{S^{3/2}}+a}{n-3}I_{\mathcal H}$, where   $I_{\mathcal H}$ denotes  the unit matrix in ${\mathcal H}$, we have 
 $$
 \tr C_0=0, \quad |C|^2=|C_0|^2+\frac{(\frac{f_3}{S^{3/2}}+a)^2}{n-3}.
 $$
We obtain from (\ref{eq:2.6}) and the above formula
\begin{equation}\label{eq:2.7}
\begin{aligned}
 &S^3-2Sf-\dfrac{n-1}{n-2}f^2_3\\
 &=\dfrac{2}nS^3+S^3a^2+\frac{(f_3+S^{3/2}a)^2}{n-3}-\frac{f_3^2}{n-2}
              +2S^3|b|^2+S^3|C_0|^2\\
 &=\dfrac{2}nS^3+\frac{n-2}{n-3}\left(S^{3/2}a+\frac{f_3}{n-2}\right)^2
              +2S^3|b|^2+S^3|C_0|^2\ge\dfrac{2}nS^3,
\end{aligned}
\end{equation}
that is,
\begin{equation*}
2Sf+\dfrac{n-1}{n-2}f^2_3\leq (1-\dfrac2n)S^3.
\end{equation*}
For $f=0$, 
\[
 \Lambda e_1=\frac{e_2}{\sqrt n},\qquad
 \Lambda e_2=\frac{e_1}{\sqrt n}+\dfrac{f_3}{S^{3/2}}e_2.
 \]
Hence, the span of $e_1,e_2$ is invariant under $\Lambda$.
Its orthogonal complement has dimension $n-2$ and a symmetric
block $C$ with $\tr C=-\dfrac{f_3}{S^{3/2}}$.
Thus
$$
1=\frac{2}{n}+\frac{f_3^2}{S^3}+|C|^2
\ge\frac{2}{n}+\frac{n-1}{n-2}\frac{f_3^2}{S^3},
$$
which  proves (\ref{eq:2.3}).\\
If $f=0$, then $\vec\varphi=\vec\varphi_*=0$, and the second assertion follows.
We only consider $f>0$. If 
\begin{equation*}
2Sf+(\dfrac{n-1}{n-2}+\dfrac19)f^2_3>(1-\dfrac2n)S^3,
\end{equation*}
according to (\ref{eq:2.7}), we infer
\begin{equation}\label{eq:2.8}
\begin{aligned}
 &\frac19f_3^2>\frac{n-2}{n-3}\left(S^{3/2}a+\frac{f_3}{n-2}\right)^2
              +2S^3|b|^2+S^3|C_0|^2.
\end{aligned}
\end{equation}
We obtain $f_3\neq 0$. 
Define
$$
\begin{aligned}
&\vec q_1=\frac{1}{S}(\lambda_1^2, \cdots, \lambda_n^2)^T,  \quad  \vec q_2=(\dfrac{\lambda_1^3}{S\sqrt S},\ldots,\dfrac{\lambda_n^3}{S\sqrt S})^T.
\end{aligned}
$$
We know that 
\begin{equation}
 \begin{aligned}
 &\vec q_1=\Lambda e_2=\frac{e_1}{\sqrt n}+\dfrac{f_3}{S^{3/2}}e_2+\dfrac{\sqrt f}S\,e_3, \\
&\vec q_2= \Lambda q_1=\dfrac{f_3}{S\sqrt{nS}}e_1+(\dfrac1n+\dfrac{f_3^2}{S^{3}}+\dfrac{f}{S^2})e_2
+\dfrac{\sqrt f}S (\dfrac{f_3}{S^{3/2}}+a) e_3+\dfrac{\sqrt f}S\vec b,
\end{aligned}
\end{equation}
$$
{\vec q}=S\sqrt S\vec q_2- \dfrac75\sqrt Sf_3\vec q_1.
$$
The projection ${\vec \psi }$ of $\vec q$  onto $\{e_1, e_2\}^{\perp}$  is given by
$$
{\vec \psi}=\sqrt {Sf} (a-\dfrac25\dfrac{f_3}{S^{3/2}}) e_3+\sqrt {Sf}\vec b.
$$
We obtain
$$
|{\vec \psi}|^2=Sf\bigl[(a-\dfrac25\dfrac{f_3}{S^{3/2}})^2+|\vec b|^2\bigr], \quad 
\langle \dfrac{\sqrt f}Se_3, {\vec \psi}\rangle=\dfrac{f}{\sqrt S}(a-\dfrac25\dfrac{f_3}{S^{3/2}})
$$
Since ${\vec\varphi}=\dfrac{\sqrt f}Se_3$, if $\vec\psi\ne0$, we have
\begin{equation}\label{eq:2.10}
 |\vec\varphi_*|^2
 =|\vec\varphi|^2-\frac{\langle\vec\varphi,\vec\psi\rangle^2}{|\vec\psi|^2}
 =\frac{f}{S^2}
 \frac{|b|^2}{(a-\frac25\frac{f_3}{S^{3/2}})^2+|b|^2}.
\end{equation}
If $\vec\psi=0$, then
$a=\frac25\frac{f_3}{S^{3/2}}$ and $b=0$.
Assume that $|\vec\varphi_*|^2\ge\frac59\frac{f}{S^2}$.
By \eqref{eq:2.10}, we obtain, when $\vec\psi\ne0$,
\[
 |b|^2\ge\frac54\left(a-\frac25\frac{f_3}{S^{3/2}}\right)^2.
\]
This inequality also holds when $\vec\psi=0$.
Hence, from \eqref{eq:2.8} and $(n-2)/(n-3)\ge1$, we infer
\begin{equation*}
 \frac19 f_3^2>
 \left(S^{3/2}a+\frac{f_3}{n-2}\right)^2
 +\frac52\left(S^{3/2}a-\frac25f_3\right)^2.
\end{equation*}
On the other hand, a direct computation yields
\begin{align*}
 &\left(S^{3/2}a+\frac{f_3}{n-2}\right)^2
 +\frac52\left(S^{3/2}a-\frac25f_3\right)^2\\
 &=\frac72\left[S^{3/2}a+
       \frac27\left(\frac1{n-2}-1\right)f_3\right]^2
   +\frac57\left(\frac1{n-2}+\frac25\right)^2f_3^2\\
 &\ge\frac4{35}f_3^2>\frac19f_3^2,
\end{align*}
since $f_3\ne0$ and $4/35-1/9=1/315>0$.
This is a contradiction. Thus $|\vec\varphi_*|^2<\frac59\frac{f}{S^2}$ for $f>0$,
which proves \eqref{eq:2.4}.
\end{proof}

The following  formulas can be found in  Peng-Terng \cite{pt1} and Yang-Cheng \cite{yc3}:
\begin{equation}\label{eq:2.12}
\begin{aligned}
\begin{cases}
& \Delta h_{ij}=(n-S)h_{ij},\qquad \sum_{i,j,k}h_{ijk}^2=S(S-n)=tS^2,\\
 &\sum_{i,j,k,l}h_{ijkl}^2
 =\bigl[(2+\frac{3}{n})t-(1+\frac{3}{n})\bigr]tS^3+3(A-2B),
\end{cases}
\end{aligned}
\end{equation}
where 
\[
  A=\sum_{ijk}\lambda_i^2h_{ijk}^2,\qquad
 B=\sum_{ijk}\lambda_i\lambda_jh_{ijk}^2,\qquad
  C=\sum_{ijk}\lambda_ih_{ijk}^2.
\]
Since  $X:M^n\to\mathbb S^{n+1}(1)$  is minimal and $S$ is constant, we have
\begin{equation}\label{eq:2.13}
\begin{aligned}
& \sum_ih_{iik}=0,\qquad \sum_i\lambda_ih_{iik}=0,\\
& \sum_{ij}h_{ij}h_{ijkl}=-\sum_{ij}h_{ijk}h_{ijl},\qquad
 \sum_{ijk}h_{ijk}h_{ijkl}=0.
\end{aligned}
\end{equation}
Put $D_k=\sum_i\lambda_i^2h_{iik}$.
By a direct computation, we know 
\begin{equation}\label{eq:2.14}
\begin{aligned}
 &\dfrac13\Delta f_3=(n-S)f_3+2C,\\
& \dfrac14\Delta f_4=(n-S)f_4+(2A+B)\\
\end{aligned}
\end{equation}
The following lemma can be proved by making use of the  idea  in Yang-Cheng \cite{yc3}. For the reader's convenience, we give a proof.
\begin{lemma}\label{lem:2.2} 
For a minimal hypersurface with constant scalar curvature, 
one has, at each point,
\begin{equation}\label{eq:cubic}
 A+2B\ge \frac{3C^2}{tS^2}
 +4\sum_l\frac{D_l^2}{S+\frac{2n}{n+2}\lambda_l^2}.
\end{equation}
\end{lemma}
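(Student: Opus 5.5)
The plan is to write $A+2B$ as a single weighted square norm of the tensor $(h_{ijk})$, and then bound it below by Bessel's inequality against an orthogonal family of test tensors.

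\textbf{Step 1: symmetrization.} Since $h_{ijk}$ is totally symmetric by the Codazzi equations, we may symmetrize the weights:
\[
A=\frac13\sum_{ijk}(\lambda_i^2+\lambda_j^2+\lambda_k^2)h_{ijk}^2,\qquad
B=\frac13\sum_{ijk}(\lambda_i\lambda_j+\lambda_j\lambda_k+\lambda_k\lambda_i)h_{ijk}^2 .
\]
Put $\sigma_{ijk}=\lambda_i+\lambda_j+\lambda_k$. Then
\[
A+2B=\frac13\sum_{ijk}\sigma_{ijk}^2h_{ijk}^2=\frac13|w|^2,\qquad w_{ijk}=\sigma_{ijk}h_{ijk}.
\]
In the same way $C=\frac13\langle w,h\rangle$, where $\langle\cdot,\cdot\rangle$ is the Euclidean inner product on $3$-tensors. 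By \eqref{eq:2.12}, $|h|^2=tS^2$.

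\textbf{Step 2: the $C$-term.} Projecting $w$ onto the unit vector $h/\sqrt{tS^2}$ gives the contribution $\langle w,h\rangle^2/(tS^2)=9C^2/(tS^2)$ to $|w|^2$. After the factor $\frac13$ this is exactly $3C^2/(tS^2)$. Cauchy--Schwarz alone would already give $A+2B\ge 3C^2/(tS^2)$. The $D_l$-terms should come from further directions orthogonal to $h$.

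\textbf{Step 3: test tensors for $D_l$.} For each $l$ I will let $G^l$ be the symmetric tensor supported on the index triples that are permutations of $(i,i,l)$, with value $g^l_i$ there. A direct count gives
\[
\langle w,G^l\rangle=c\sum_i(2\lambda_i+\lambda_l)g^l_i\,h_{iil},
\]
where $c$ is a multiplicity constant to be tracked. Using the constraints $\sum_ih_{iil}=0$ and $\sum_i\lambda_ih_{iil}=0$ from \eqref{eq:2.13}, it suffices that $(2\lambda_i+\lambda_l)g^l_i\equiv\lambda_i^2$ modulo $\operatorname{span}\{1,\lambda_i\}$ as functions of $i$. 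The choice
\[
g^l_i=\frac{\lambda_i}{2}+\alpha_l+\beta_l\lambda_i^{\,0}
\]
is the natural family: the base choice $\lambda_i/2$ already works, and the free constants are to be optimized. With the base choice, $\langle h,G^l\rangle=\frac{c'}{2}\sum_i\lambda_ih_{iil}=0$, and any constant shift also pairs to zero with $h$ because $\sum_ih_{iil}=0$. So every $G^l$ is orthogonal to $h$.

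Next I will check that $G^l\perp G^m$ for $l\ne m$. Their supports meet only in entries of the form $(l,l,m)$ or $(m,m,l)$, so I will either adjust the $g^l_l$ entries to remove this overlap or verify that it contributes consistently. Bessel's inequality applied to the orthogonal family $\{h,G^1,\dots,G^n\}$ then gives
\[
A+2B\ge\frac{3C^2}{tS^2}+\frac13\sum_l\frac{\langle w,G^l\rangle^2}{|G^l|^2}
=\frac{3C^2}{tS^2}+\frac{c^2}{3}\sum_l\frac{D_l^2}{|G^l|^2}.
\]

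\textbf{Step 4 (main obstacle): the Gram computation.} The remaining task is to compute $|G^l|^2$ with the correct multiplicities. The entries $(i,i,l)$ with $i\ne l$ occur three times each, while the diagonal entry $(l,l,l)$ occurs once. I then need to choose the free constant (a shift of $g^l_i$ by a multiple of $1$, which changes neither $\langle w,G^l\rangle$ nor orthogonality to $h$) so as to minimize $|G^l|^2$. The minimum should equal a constant times $S+\frac{2n}{n+2}\lambda_l^2$. The factor $\frac{n}{n+2}$ should arise from optimizing the shift against the special diagonal entry $(l,l,l)$. Getting this optimization and the resulting overall constant $4$ exactly right is where I expect the real work to be.
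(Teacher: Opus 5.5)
Your plan is the paper's argument written as Bessel's inequality. The paper subtracts from $w$ the tensor $h_{ij}y_k+h_{jk}y_i+h_{ik}y_j-\delta_{ij}a_ky_k-\delta_{ik}a_jy_j-\delta_{jk}a_iy_i$, with $a_k=\frac{2\lambda_k}{n+2}$, and applies Cauchy--Schwarz against $h_{ijk}$. That tensor equals $2\sum_l y_lG^l$ for your family with $g^l_i=\frac{\lambda_i}{2}-\frac{\lambda_l}{n+2}$, and the paper's $y_k=2D_k/(S+\frac{2n}{n+2}\lambda_k^2)$ are exactly the Bessel coefficients.

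There is, however, a concrete error in Step 3 at the diagonal entry. Suppose $G^l$ takes the value $g^l_l$ once at $(l,l,l)$, as your Step 4 says. Then
\[
\langle h,G^l\rangle=3\sum_ig^l_ih_{iil}-2g^l_lh_{lll}=-2g^l_lh_{lll},\qquad \langle w,G^l\rangle=3D_l-6\lambda_lg^l_lh_{lll}.
\]
So $G^l$ is not orthogonal to $h$, and its pairing with $w$ is not a multiple of $D_l$. The Gram minimum would also come out as $\frac34S-\frac12\lambda_l^2-\frac{\lambda_l^2}{3n-2}$, not a multiple of $S+\frac{2n}{n+2}\lambda_l^2$. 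The entry $(l,l,l)$ must carry $3g^l_l$, i.e. $G^l_{ijk}=\delta_{ij}\delta_{kl}g^l_i+\delta_{jk}\delta_{il}g^l_j+\delta_{ik}\delta_{jl}g^l_i$. The paper's tensor has this weighting automatically.

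With that correction, your claims hold and Step 4 is short. For $g^l_i=\frac{\lambda_i}{2}+\alpha$ you get $\langle h,G^l\rangle=3\sum_ig^l_ih_{iil}=0$ and $\langle w,G^l\rangle=3\sum_i(2\lambda_i+\lambda_l)g^l_ih_{iil}=3D_l$. Your overlap worry is unfounded: the multiset $\{i,i,l\}$ determines $l$, so $G^l$ and $G^m$ have disjoint supports for $l\neq m$. Finally,
\[
|G^l|^2=3\sum_i(g^l_i)^2+6(g^l_l)^2=\tfrac34S+\tfrac32\lambda_l^2+6\lambda_l\alpha+(3n+6)\alpha^2 .
\]
This is minimized at $\alpha=-\frac{\lambda_l}{n+2}$ (the paper's $-a_l/2$), with value $\frac34\bigl(S+\frac{2n}{n+2}\lambda_l^2\bigr)$. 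The $l$-th Bessel term therefore contributes $\frac13\cdot 9D_l^2\big/\bigl(\frac34(S+\frac{2n}{n+2}\lambda_l^2)\bigr)=4D_l^2/(S+\frac{2n}{n+2}\lambda_l^2)$, which gives exactly the stated inequality.
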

\begin{proof}
Since, for any $y_k$,
$$
\sum_{i,j,k}h_{ij}h_{ijk}y_k=0,
$$
we have, taking  $a_k=\dfrac{2\lambda_k}{n+2}$, $y_k=\dfrac{2}{S+\dfrac{2n}{n+2}\lambda_k^2}\sum_{i}\lambda_i^2h_{iik}$
$$
\begin{aligned}
&\bigl(3\sum_{i,j,k}\lambda_ih_{ijk}^2\bigr)^2\\
&=\bigl(\sum_{i,j,k}\bigl\{(\lambda_i+\lambda_j+\lambda_k)h_{ijk}-h_{ij}y_k-h_{jk}y_i-h_{ik}y_j+\delta_{ij}a_ky_k
+\delta_{ik}a_jy_j+\delta_{jk}a_iy_i\bigr\}h_{ijk}\bigr)^2\\
&\leq \sum_{i,j,k}\bigl\{(\lambda_i+\lambda_j+\lambda_k)h_{ijk}-h_{ij}y_k-h_{jk}y_i-h_{ik}y_j
+\delta_{ij}a_ky_k+\delta_{ik}a_jy_j+\delta_{jk}a_iy_i\bigr\}^2\sum_{i,j,k}h_{ijk}^2\\
&=\bigl\{3(A+2B)-12\sum_k\sum_{i}\lambda_i^2h_{iik}y_k+3\sum_ky_k^2(S+2\lambda_k^2)-\sum_k\dfrac{12}{n+2}\lambda_k^2y_k^2\bigr\}S(S-n)\\
&=\bigl\{3(A+2B)-12\sum_{k}\dfrac{(\sum_i\lambda_i^2h_{iik})^2}{S+\dfrac{2n}{n+2}\lambda_k^2}\bigr\}S(S-n).\\
\end{aligned}
$$
We obtain
$$
\begin{aligned}
&3\bigl(\sum_{i,j,k}\lambda_ih_{ijk}^2\bigr)^2
\leq \bigl\{(A+2B)-4\sum_{k}\dfrac{n+2}{(n+2)S+2n\lambda_k^2}(\sum_i\lambda_i^2h_{iik})^2\bigr\}tS^2.\\
\end{aligned}
$$
\end{proof}

We retain the notation of the lemma 2.1. In particular, we put
\[
 e_1=\frac1{\sqrt n}(1,\ldots,1)^T,\qquad
 e_2=\left(\frac{\lambda_1}{\sqrt S},\ldots,\frac{\lambda_n}{\sqrt S}\right)^T.
\]
For any vector $\vec w=(w_1,\ldots,w_n)^T$, define
\[
 B_{ij}(w)=\sum_kh_{ijk}w_k,\quad B(w)=(B_{ij}(w)),\quad
 G_{kl}=\sum_{i,j}h_{ijk}h_{ijl},\quad D=\sum_k\lambda_k^2D_k^2.
\]

\begin{lemma}\label{lem:2.3}
If $\sum_i(S\lambda_i^3-\dfrac{7}5f_3\lambda_i^2)h_{iik}=0$, for any $k$, then,
\begin{gather}
 (n+2)S\sum_{i,j}(\sum_kh_{ijk}w_k)^2+2n\sum_{i,j}(\sum_kh_{ijk}\lambda_kw_k)^2\le ntS^3\sum_kw_k^2,
 \label{eq:gram}\\
 \sum_i\left(\frac{n+2}{n}S+2\lambda_i^2\right)D_i^2\le tS^5|{\vec \varphi}_*|^2,
 \label{eq:budget}
\end{gather}
for arbitrary $w_k$.
\end{lemma}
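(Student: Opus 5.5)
The plan is to prove \eqref{eq:gram} first, by a Cauchy--Schwarz argument in the style of Lemma~\ref{lem:2.2}. Inequality \eqref{eq:budget} should then follow from \eqref{eq:gram} once $D_k$ is rewritten as a contraction of $h_{ijk}$ against $\vec\varphi_*$.

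\textbf{Step 1 (rewriting $D_k$).} Since $\lambda_i^2=a_{ii}+\frac{f_3}{S}\lambda_i+\frac Sn$, and $\sum_ih_{iik}=\sum_i\lambda_ih_{iik}=0$ by \eqref{eq:2.13}, we get $D_k=S\sum_i(\vec\varphi)_ih_{iik}$. For fixed $k$, the vector $(h_{11k},\dots,h_{nnk})^T$ is orthogonal to $\one$ and to $(\lambda_i)$. By hypothesis it is also orthogonal to $\vec q$, since $S\lambda_i^3-\frac75f_3\lambda_i^2=Sb_{ii}$. So only the component of $\vec\varphi$ orthogonal to $\operatorname{span}\{\one,(\lambda_i),\vec q\}$ contributes. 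By \eqref{eq:projection-definition} this component is $\vec\varphi_*$, and therefore
\[
 D_k=S\sum_i(\vec\varphi_*)_i\,h_{iik}.
\]
This is the only place the hypothesis is used, apart from possibly in Step 2.

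\textbf{Step 2 (proof of \eqref{eq:gram}).} Fix $w$ and put $u=B(w)$, a symmetric matrix. Then
\[
 |B(w)|^2=\sum_{i,j,k}h_{ijk}u_{ij}w_k=\sum_{i,j,k}h_{ijk}T_{ijk},
\]
where $T$ is the full symmetrization of $u\otimes w$. This equality is unchanged if we subtract from $T$ any tensor that $h_{ijk}$ annihilates. By \eqref{eq:2.13} and Codazzi, such tensors include $\delta_{ij}x_k$, $\lambda_i\delta_{ij}x_k$ and $h_{ij}x_k$, together with their symmetrizations, for arbitrary $x$. We choose these corrections, as in Lemma~\ref{lem:2.2}, so as to minimize $|T|^2$. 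Cauchy--Schwarz and $\sum h_{ijk}^2=tS^2$ from \eqref{eq:2.12} then give
\[
 |B(w)|^4\le tS^2\,|T_{\rm corr}|^2.
\]
We want $|T_{\rm corr}|^2$ to be controlled by $|w|^2$ in a way that produces the constants $(n+2)/n$ and $2$. More precisely, we expand $|T_{\rm corr}|^2$, use $|h|^2=S$, and bound the cross terms involving $\Lambda w$. The expected output is the quadratic form
\[
 Q(w):=\frac{n+2}{n}S|B(w)|^2+2|B(\Lambda w)|^2\le tS^3|w|^2 .
\]
If the single-$w$ test tensor does not close the estimate, we will instead apply Cauchy--Schwarz to $|B(w)|^2+\mu\langle B(w),\cdots\rangle$, using the test tensor built from $u=B(w)$ and $u'=B(\Lambda w)$ together, and optimize over $\mu$.

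\textbf{Step 3 (proof of \eqref{eq:budget}).} Write $c_i=\frac{n+2}{n}S+2\lambda_i^2$. By Step 1,
\[
 \sum_ic_iD_i^2=S\sum_j(\vec\varphi_*)_j\Bigl[\frac{n+2}{n}S\,B_{jj}(D)+2\,B_{jj}(\Lambda^2D)\Bigr],
\qquad D=(D_1,\dots,D_n)^T .
\]
The bracket is a pairing of the diagonal of $B(\cdot)$ with $\vec\varphi_*$. We split it using the inner product $\langle v,w\rangle_Q$ associated with $Q$, which is positive semidefinite. This gives
\[
 \sum_ic_iD_i^2\le S|\vec\varphi_*|\,Q(D)^{1/2}\Bigl(\sum_ic_iD_i^2\Bigr)^{1/2}\cdot(\text{normalizing factor}).
\]
Inserting $Q(D)\le tS^3|D|^2$ from \eqref{eq:gram}, and comparing $S|D|^2$ with $\sum c_iD_i^2$, should yield \eqref{eq:budget} after dividing by $\bigl(\sum c_iD_i^2\bigr)^{1/2}$.

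\textbf{Main obstacle.} The main difficulty is Step 2: we must pick the correction terms in $T$ so that the $\lambda$-dependent pieces combine exactly into $2n|B(\Lambda w)|^2$ with the sharp coefficient. Step 3 also needs care, since the Cauchy--Schwarz pairing must be weighted so that the $\lambda_i^2$ weights in $c_i$ match those in $Q$.
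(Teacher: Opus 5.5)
Your Step 1 is correct and is exactly the paper's identity $D_k=S\sum_i(\vec\varphi_*)_ih_{iik}$. Steps 2 and 3, however, are not proofs yet, and the routes you sketch do not close as stated.

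\textbf{Step 2 lacks the mechanism that produces $2n|B(\Lambda w)|^2$.} Here $\Lambda w=(\lambda_kw_k)_k$, which the paper writes $h(\vec w)$. A Cauchy--Schwarz with the single test tensor $\mathrm{Sym}(B(w)\otimes w)$ pairs with $h_{ijk}$ to give $|B(w)|^2$ and nothing else, so it can only bound $|B(w)|^2$.
\begin{itemize}
\item The corrections $\mathrm{Sym}(\delta\otimes x)$ give exactly $\frac{n+2}{n}S|B(w)|^2\le tS^3|w|^2$, which is the first term of \eqref{eq:gram} alone.
\item The corrections $\mathrm{Sym}(h\otimes x)$ bring in contractions like $h(w,\Lambda w,\cdot)$, never $|B(\Lambda w)|^2$.
\end{itemize}
Your fallback test tensor, built from $B(w)\otimes w$ and $B(\Lambda w)\otimes\Lambda w$, needs its projected norm bounded by $|w|^2\bigl(\frac{n+2}{n}|B(w)|^2+\frac2S|B(\Lambda w)|^2\bigr)$. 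That bound is equivalent to \eqref{eq:gram} itself, and you leave it uncomputed.

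The paper uses a different idea. Diagonalize $G_{kl}=\sum_{i,j}h_{ijk}h_{ijl}$, with $G\vec u_a=g_a\vec u_a$. The matrices $I_E/\sqrt n$, $h/\sqrt S$ and $B(u_b)/\sqrt{g_b}$ are Frobenius-orthonormal. Bessel's inequality applied to $\frac12(\vec w\otimes\vec u_a+\vec u_a\otimes\vec w)$ then gives
\[
\frac{\langle h(\vec w),\vec u_a\rangle^2}{S}+\sum_{b:\,g_b>0}\frac{\langle B(u_b)\vec w,\vec u_a\rangle^2}{g_b}\le\frac12|\vec w|^2+\Bigl(\frac12-\frac1n\Bigr)\langle\vec w,\vec u_a\rangle^2 .
\]
Now multiply by $g_a$ and sum over $a$:
\begin{itemize}
\item $\sum_ag_a=tS^2$ produces the right-hand side;
\item $\sum_ag_a\langle\vec v,\vec u_a\rangle^2=|B(v)|^2$ turns the first term into $|B(\Lambda w)|^2/S$;
\item full symmetry of $h_{ijk}$, together with $g_a/g_b+g_b/g_a\ge2$, shows the double sum is at least $|B(w)|^2$.
\end{itemize}
So $|B(\Lambda w)|^2$ arises as the eigenvalue-weighted component along $h$ that Bessel's inequality discards. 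The hypothesis on $b_{ii}$ plays no role in this part.

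\textbf{Step 3 does not follow from \eqref{eq:gram}.} Write $c=\diag(c_i)$. Your chain is $\sum_ic_iD_i^2=S\langle\vec\varphi_*,\diag B(cD)\rangle\le S|\vec\varphi_*|\,|B(cD)|$. It closes only if $|B(cD)|^2\le tS^3\sum_ic_iD_i^2$, and for general $D$ this is exactly $c^{1/2}Gc^{1/2}\preceq tS^3I$.

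What \eqref{eq:gram} actually says is $\frac{n+2}{n}SG+2\Lambda G\Lambda\preceq tS^3I$. Its $(k,l)$ entry carries the weight $\frac{n+2}{n}S+2\lambda_k\lambda_l$ instead of $\sqrt{c_kc_l}$. The two agree only on the diagonal; this is the same mismatch you noticed between $B(\Lambda^2D)$ and the $B(\Lambda D)$ in $Q$. The obvious ways of passing from one to the other lose a constant. For example, applying \eqref{eq:gram} to each $e_i$ and summing gives $\sum_ic_i|B(e_i)|^2\le ntS^3$, which is a factor $n$ too weak. That loss matters, since the sharp constant is what later gives $z\le\frac12$.

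The paper proves \eqref{eq:budget} without \eqref{eq:gram}. By Step 1 and Cauchy--Schwarz for each $k$, $D_k^2\le S^2|\vec\varphi_*|^2\sum_ih_{iik}^2$. It then shows
\[
\sum_k c_k\sum_j h_{jjk}^2\le tS^3 .
\]
This combines two facts:
\begin{itemize}
\item the multiplicity count $tS^2=\sum h_{ijk}^2\ge\sum_k\bigl(3\sum_jh_{jjk}^2-2h_{kkk}^2\bigr)$;
\item the bound $h_{kkk}^2\le\bigl(1-\frac1n-\frac{\lambda_k^2}{S}\bigr)\sum_jh_{jjk}^2$, from writing $h_{kkk}=\sum_j\bigl(\delta_{kj}-\frac1n-\frac{\lambda_k\lambda_j}{S}\bigr)h_{jjk}$ using $\sum_jh_{jjk}=\sum_j\lambda_jh_{jjk}=0$.
\end{itemize}
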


\begin{proof}
The symmetric matrix
\[
 G_{kl}=\sum_{i,j}h_{ijk}h_{ijl}
\]
is positive semidefinite, because
\[
 \sum_{k,l}G_{kl}w_kw_l
 =\sum_{i,j}\sum_{k,l}h_{ijk}h_{ijl}w_kw_l
 =\sum_{i,j}\left(\sum_kh_{ijk}w_k\right)^2\ge0.
\]
Choose an orthonormal eigenbasis $\vec u_1, \ldots, \vec u_n$ for $G$, with eigenvalues
$g_k\ge0$. By a direct computation, we have
\begin{align*}
 \langle B(u_a),B(u_b)\rangle
 &=\sum_{i,j}\left(\sum_kh_{ijk}(u_a)_k\right)
                   \left(\sum_lh_{ijl}(u_b)_l\right)\\
 &=\sum_{k,l}G_{kl}(u_a)_k(u_b)_l
 =\langle \vec u_a,G(\vec u_b)\rangle=g_b\delta_{ab},
\end{align*}
where $(u_a)_k$ denotes the $k$th component of $\vec u_a$.
In particular, $|B(u_a)|^2=g_a$, and
\[
 \sum_a g_a=\tr G=\sum_k G_{kk}=\sum_{i,j,k}h_{ijk}^2=tS^2.
\]
From $\sum_i h_{iik}=\sum_i\lambda_i h_{iik}=0$, we have
\begin{align*}
 \langle I_E,B(w)\rangle
 &=\sum_k w_k\sum_i h_{iik}=0,\\
 \langle h,B(w)\rangle
 &=\sum_k w_k\sum_i\lambda_i h_{iik}=0.
\end{align*}
Here $h=(h_{ij})$.
The symmetric matrices
\[
 \frac {I_E}{\sqrt n},\qquad \frac {h}{\sqrt S},\qquad
 \frac{B(u_b)}{\sqrt{g_b}}\quad(g_b>0)
\]
are orthonormal in the Frobenius inner product, where $I_E$ denotes the unit matrix.\\
For each $a$, put
\begin{align*}
 R_a={}&\frac12(\vec w\otimes \vec u_a+\vec u_a\otimes \vec w)
 -\frac{\langle \vec w, \vec u_a\rangle}{n}I_E
 -\frac{\langle h(\vec w), \vec u_a\rangle}{S}h\\
 &-\sum_{b:g_b>0}\frac{\langle B(u_b)\vec w, \vec u_a\rangle}{g_b}B(u_b).
\end{align*}
The matrices subtracted here are mutually orthogonal. For any symmetric
matrix $K$,
\[
 \left\langle\frac12(\vec w\otimes \vec u_a+\vec u_a\otimes \vec w),K\right\rangle
 =\langle K(\vec w), \vec u_a\rangle,
\]
and
\[
 \left|\frac12(\vec w\otimes \vec u_a+\vec u_a\otimes \vec w)\right|^2
 =\frac12|\vec w|^2+\frac12\langle \vec w, \vec u_a\rangle^2.
\]
By making use of these orthogonality relations, a direct computation yields
\begin{align}
 |R_a|^2={}&\frac12|\vec w|^2+
 \left(\frac12-\frac1n\right)\langle \vec w, \vec u_a\rangle^2
 -\frac{\langle h(\vec w), \vec u_a\rangle^2}{S}\notag\\
 &-\sum_{b:g_b>0}\frac{\langle B(u_b) \vec w, \vec u_a\rangle^2}{g_b}.
 \label{gradient:remainder}
\end{align}
For any vector $\vec v$, we also have
\[
 |B(v)|^2=\sum_a g_a\langle \vec v, \vec u_a\rangle^2.
\]
Full symmetry of $h_{ijk}$ gives
\[
 \langle B(u_b)(\vec w), \vec u_a\rangle
 =\sum_{i,j,k}h_{ijk}(u_a)_iw_j(u_b)_k
 =\langle B(w)(\vec u_b), \vec u_a\rangle.
\]
The latter expression is symmetric in $a,b$. If $g_a=0$, then
$B(u_a)=0$, and symmetry shows that these entries vanish whenever either
index has zero eigenvalue. Hence, by interchanging $a$ and $b$, we obtain
\begin{align*}
 &\sum_{a,b:g_a,g_b>0}\frac{g_a}{g_b}
       \langle B(w)(\vec u_b), \vec u_a\rangle^2\\
 &=|B(w)|^2+
 \sum_{a<b:g_a,g_b>0}\frac{(g_a-g_b)^2}{g_ag_b}
       \langle B(w)(\vec u_b), \vec u_a\rangle^2,
\end{align*}
since $g_a/g_b+g_b/g_a=2+(g_a-g_b)^2/(g_ag_b)$.
Multiplying \eqref{gradient:remainder} by $g_a$ and summing over $a$,
we obtain, since $\sum_a g_a=tS^2$,
\begin{align*}
 &ntS^3|w|^2-(n+2)S|B(w)|^2-2n|B(h(\vec w))|^2\\
 &\quad=2nS\sum_a g_a|R_a|^2
 +2nS\sum_{a<b:g_a,g_b>0} \frac{(g_a-g_b)^2}{g_ag_b}\langle B(w)(\vec u_b), \vec u_a\rangle^2\ge0.
\end{align*}
Since $(h(\vec w))_k=\lambda_kw_k$ in the principal frame, this is
\eqref{eq:gram}.\\
Since, for any $k$,
\[
 S\sum_i b_{ii}h_{iik}
 =\sum_i\left(S\lambda_i^3-\frac75f_3\lambda_i^2\right)h_{iik}=0,
\]
the vector $(h_{11k},\ldots,h_{nnk})^T$ is orthogonal to
$\one$, $(\lambda_1,\ldots,\lambda_n)^T$ and $\vec q$.
According to \eqref{eq:projection-definition}, we obtain
\[
 D_k=\sum_i a_{ii}h_{iik}=S\sum_i(\vec\varphi_*)_i h_{iik}.
\]
Thus, from Cauchy inequality,
\[
 D_k^2\le S^2|\vec\varphi_*|^2\sum_i h_{iik}^2.
\]
Hence, we get 
\begin{equation}\label{r:D}
 D_k^2\le S^2|{\bf \varphi_*}|^2\sum_ih_{iik}^2.
\end{equation}
Let $E_i$ now denote the $i$th coordinate vector in the principal frame.
The projection of $E_i$ onto the orthogonal complement of
$e_1$ and $e_2$ is
\[
\vec \eta= E_i-\frac1{\sqrt n}e_1
 -\frac{\lambda_i}{\sqrt S}e_2.
\]
We infer 
\[
 |\vec\eta|^2=|E_i|^2-\left(\frac1{\sqrt n}\right)^2
 -\left(\frac{\lambda_i}{\sqrt S}\right)^2
 =1-\frac1n-\frac{\lambda_i^2}{S}.
\]
Since $\sum_jh_{jjk}=\sum_j\lambda_jh_{jjk}=0$, we have
\[
 h_{kkk}=\sum_j\left(\delta_{kj}-\frac1n
                   -\frac{\lambda_k\lambda_j}{S}\right)h_{jjk}.
\]
It follows from Cauchy's inequality that
\[
 h_{kkk}^2\le\left(1-\frac1n-\frac{\lambda_k^2}{S}\right)
 \sum_jh_{jjk}^2.
\]
Thus, we can
\begin{align*}
 tS^2&=\sum_i h_{iii}^2+3\sum_{i\ne j}h_{jji}^2
       +6\sum_{i<j<k}h_{ijk}^2\\
 &\ge\sum_i\left(3\sum_jh_{jji}^2-2h_{iii}^2\right)\\
 &\ge\sum_i\left[3-2\left(1-\frac1n-\frac{\lambda_i^2}{S}\right)\right]
       \sum_jh_{jji}^2\\
 &=\sum_i\left(\frac{n+2}{n}+\frac{2\lambda_i^2}{S}\right)
       \sum_jh_{jji}^2.
\end{align*}
Multiplying the last inequality by $S$ gives
\[
 \sum_i\left(\frac{n+2}{n}S+2\lambda_i^2\right)\sum_jh_{jji}^2
 \le tS^3.
\]
Combining this inequality with \eqref{r:D}, we obtain
\begin{align*}
 \sum_i\left(\frac{n+2}{n}S+2\lambda_i^2\right)D_i^2
 &\le S^2|\vec \varphi_*|^2\sum_i\left(\frac{n+2}{n}S+2\lambda_i^2\right)
       \sum_jh_{jji}^2\\
 &\le tS^5|\vec \varphi_*|^2.
\end{align*}
This proves \eqref{eq:budget}. If $|\vec \varphi_*|^2=0$, \eqref{r:D} gives $D_i=0$
for every $i$, so \eqref{eq:budget} also holds in this case.
\end{proof}

\section{Main estimates}

For a minimal hypersurface $X:M^n\to\mathbb S^{n+1}(1)$, 
we use the definition of $u_{ijkl}$ introduced by Yang-Cheng \cite{yc3}.
\[
 u_{ijkl}=\frac{h_{ijkl}+h_{ijlk}+h_{iklj}+h_{jkli}}{4}.
\]
 In view of the Ricci identity,  a direct computation yields
 \begin{equation*}
\begin{aligned}
  \sum_{i,j,k,l}h_{ijkl}^2 = \sum_{i,j,k,l}u_{ijkl}^2 +\frac32\bigl( Sf_4-f_3^2-\dfrac{S^3}n+\dfrac{S(S-n)^2}n\bigr),
\end{aligned}
\end{equation*}
which was proved in \cite{cwy}. According to (\ref{eq:2.12}), we have
\begin{equation}\label{eq:3.1}
\begin{aligned}
  \sum_{i,j,k,l}u_{ijkl}^2 = 3(A-2B)-\frac32Sf +\bigl[(2+\frac{3}{2n})t-(1+\frac{3}{n})\bigr]tS^3.
\end{aligned}
\end{equation}
Since $S$ is constant, 
the Ricci identities, (\ref{eq:2.12}) and (\ref{eq:2.13}) give
\begin{equation}
\begin{aligned}
& \sum_iu_{iikl}=-\frac{t}{2}S\lambda_k\delta_{kl},\\
& \sum_i\lambda_iu_{iikl}
 =-\sum_{i,j}h_{ijk}h_{ijl}
       +\frac{1}{2}\left(a_{kk}+\frac{tS}{n}\right)S\delta_{kl},
       \label{eq:U-traces}\\
 &\sum_{i,j,k}h_{ijk}u_{ijkl}=-\frac{3}{2}\lambda_l D_l.
\end{aligned}
\end{equation}
Define, for $\vec b=(b_1, \cdots, b_n)^T$,
\begin{equation}
\begin{aligned}
 &V_{ijkl}(b)=b_ih_{jkl}+b_jh_{kli}+b_kh_{lij}+b_lh_{ijk}, \quad X_{ijkl}=h_{ij}a_{kl}+a_{ij}h_{kl},\\
 &Y_{ijkl}=\dfrac{S}3(h_{ij}\delta_{kl}+h_{kl}\delta_{ij}+h_{ik}\delta_{jl}+h_{il}\delta_{jk}+h_{kj}\delta_{il}+h_{jl}\delta_{ik}), \quad Z_{ijkl}=h_{ij}h_{kl}.
\end{aligned}
 \end{equation}
Since $S$ is constant, we obtain the following by a direct computation:
\begin{lemma}\label{lem:3.1}
$$
\begin{aligned}
& \sum_{i,j,k,l}X_{ijkl}^2=2S(f_4-\dfrac{f_3^2}S-\dfrac{S^2}n)=2Sf, \quad 
 \sum_{i,j,k,l}Y_{ijkl}^2=\frac{2(n+4)}3S^3,  \quad
 \sum_{i,j,k,l}Z_{ijkl}^2=S^2,\\
& \sum_{i,j,k,l}X_{ijkl}Y_{ijkl}=\frac83Sf,\quad \sum_{i,j,k,l}X_{ijkl}Z_{ijkl}=0, \quad
\sum_{i,j,k,l}X_{ijkl}V_{ijkl}(b)=4\sum_lb_l\lambda_lD_l,\\
& \sum_{i,j,k,l}X_{ijkl}u_{ijkl}=-2A+Sf+\dfrac{2f_3C}S+\dfrac{2tS^3}n, \quad \sum_{i,j,k,l}Z_{ijkl}Y_{ijkl}=\frac43Sf_3, \quad 
\sum_{i,j,k,l}V_{ijkl}(b)Y_{ijkl}=0,\\ 
 & \sum_{i,j,k,l}u_{ijkl}Y_{ijkl}=-tS^3, \quad \sum_{i,j,k,l}V_{ijkl}(b)Z_{ijkl}=0, \quad  \sum_{i,j,k,l}u_{ijkl}Z_{ijkl}=-C,\\
 &\sum_{i,j,k,l}u_{ijkl}V_{ijkl}(b)=-6\sum_lb_l\lambda_lD_l,\quad 
 \sum_{i,j,k,l}V_{ijkl}^2(b)=4\sum_{i,j,k}h_{ijk}^2\sum_ib_i^2+12\sum_{i,j}G_{ij}b_ib_j.
\end{aligned}
$$
\end{lemma}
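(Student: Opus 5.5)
The plan is to verify each identity of Lemma~\ref{lem:3.1} by expanding the tensors in the principal frame. Throughout I will use $h_{ij}=\lambda_i\delta_{ij}$, $a_{ij}=a_{ii}\delta_{ij}$, and the trace identities $\sum_i a_{ii}=0$, $\sum_i a_{ii}\lambda_i=0$, $\sum_i a_{ii}^2=f$. I will also use \eqref{eq:2.13} and \eqref{eq:U-traces}, the full symmetry of $h_{ijk}$, and the symmetry of $u_{ijkl}$ in all four indices. The symmetry of $u_{ijkl}$ follows from the Codazzi equation together with its definition as an average.

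\emph{Norms of $X$, $Y$, $Z$.} These reduce to traces. For $X$ we get $|X|^2=2\sum a_{ij}^2\sum h_{kl}^2+2(\sum h_{ij}a_{ij})^2=2Sf$. For $Z$ we get $|Z|^2=S^2$. For $Y$ there are six terms of the form $h\otimes\delta$ with a permuted index placement. Each squared term gives $nS$. The cross terms give either $S\cdot n$-free contractions equal to $S$ (when two $\delta$'s share an index pattern) or $(\tr h)^2=0$ (when both $h$'s are contracted with a $\delta$ against each other). Counting the 36 pairs then yields $\frac{S^2}{9}(6nS+24S)=\frac{2(n+4)}3S^3$.

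\emph{Mixed products among $X$, $Y$, $Z$.} These are handled the same way:
\begin{itemize}
\item $\langle X,Z\rangle=2S\sum a_{ij}h_{ij}=0$.
\item $\langle X,Y\rangle=\frac{S}{3}\cdot 2\cdot\bigl(2\cdot 0+4\sum_i a_{ii}\lambda_i^2\bigr)=\frac83 S\sum a_{ii}\lambda_i^2$. Since $\sum_i a_{ii}\lambda_i^2=\sum a_{ii}(a_{ii}+\frac{f_3}{S}\lambda_i+\frac Sn)=f$, this equals $\frac83Sf$.
\item $\langle Z,Y\rangle=\frac S3\cdot 4f_3$. Here the $\delta_{kl}$-type terms vanish because $\tr h=0$.
\end{itemize}

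\emph{Products with $V(b)$.} Each term of $V(b)$ carries an $h_{\cdot\cdot\cdot}$ factor, so contracting against $\delta$ or $h$ produces sums $\sum_i h_{iik}=0$ or $\sum_i\lambda_ih_{iik}=0$. This gives $\langle V,Y\rangle=\langle V,Z\rangle=0$. For $X$, the contraction of $h_{ij}a_{kl}$ with $b_ih_{jkl}$ gives $\sum_i b_i\lambda_i\sum_k a_{kk}h_{kki}=\sum b_i\lambda_iD_i$. The contractions with $b_kh_{lij}$-type terms give $\sum b_k a_{kk}\sum\lambda_ih_{iik}=0$. Summing over the symmetric placements leaves $4\sum_l b_l\lambda_lD_l$. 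For $|V|^2$, the diagonal pairings give $4|b|^2\sum h_{ijk}^2$, and the twelve off-diagonal pairings give $\sum b_ib_j h_{jkl}h_{ikl}=\sum G_{ij}b_ib_j$.

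\emph{Products with $u$.} By the symmetry of $u$, every contraction reduces to traces already computed.
\begin{itemize}
\item $\langle u,Z\rangle=\sum_{k}\lambda_k\sum_i\lambda_iu_{iikk}$. Substituting \eqref{eq:U-traces}, the first part gives $-\sum_k\lambda_kG_{kk}=-C$, and the second part vanishes since $\sum a_{kk}\lambda_k=\sum\lambda_k=0$.
\item $\langle u,Y\rangle=2S\sum_{k}\lambda_k\sum_iu_{iikk}=-tS^2\sum\lambda_k^2=-tS^3$.
\item $\langle u,V(b)\rangle=4\sum_l b_l\sum h_{ijk}u_{ijkl}=-6\sum b_l\lambda_lD_l$.
\item $\langle u,X\rangle=2\sum_k a_{kk}\sum_i\lambda_iu_{iikk}=-2\sum_k a_{kk}G_{kk}+S\sum_k a_{kk}(a_{kk}+\frac{tS}{n})$. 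The second part is $Sf$. Expanding $a_{kk}=\lambda_k^2-\frac{f_3}S\lambda_k-\frac Sn$ in the first part gives $-2A+\frac{2f_3}{S}C+\frac{2S}{n}tS^2$.
\end{itemize}

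The main obstacle I expect is the bookkeeping in $|Y|^2$ and $\langle X,Y\rangle$, that is, correctly classifying the 36 cross pairings. The algebraic reduction $\sum a_{ii}\lambda_i^2=f$ also needs care.
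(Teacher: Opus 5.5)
Your proposal is correct and takes essentially the paper's approach. The paper only says these identities follow ``by a direct computation,'' and your principal-frame contractions are that computation; they rest on $\sum_i a_{ii}=\sum_i a_{ii}\lambda_i=0$, $\sum_i a_{ii}\lambda_i^2=\sum_i a_{ii}^2=f$, \eqref{eq:2.13}, \eqref{eq:U-traces} and the full symmetry of $u_{ijkl}$, and each value checks out, including the $6nS+24S$ pairing count for $\sum Y_{ijkl}^2$.
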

\begin{theorem}
For a minimal hypersurface $X:M^n\to\mathbb S^{n+1}(1)$ with constant scalar curvature and $S>n$, we have, for any $u>0$,
\begin{equation}\label{eq:3.5}
\begin{aligned}
 &3(A-2B)+4uA-(\frac32+\dfrac{4t}{n}u-2u^2+2u)Sf -4uf_3\dfrac{C}S\\
 &+\bigl[(2+\frac{3}{8n}+\dfrac{3}{2n^2})t-(1+\frac{3}{n}+\dfrac{4u}{n})\bigr]tS^3
 -(\dfrac{56}{9t}+\dfrac{16}{3t}u)\dfrac{D}{S^2}\\
 &+\dfrac{16}{3t^2S^4}\sum_{i,j}(\sum_{k}h_{ijk}\lambda_kD_k)^2
 -\dfrac{1}{S^2}(C-\frac{tS}{n}f_3)^2\geq 0.
\end{aligned}
\end{equation}
\end{theorem}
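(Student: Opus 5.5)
The plan is to obtain \eqref{eq:3.5} as the nonnegativity of the squared norm of a residual tensor,
\[
 0\le\sum_{i,j,k,l}\bigl(u_{ijkl}+uX_{ijkl}-\alpha Y_{ijkl}-\beta Z_{ijkl}-V_{ijkl}(\vec b)\bigr)^2 ,
\]
followed by further Cauchy--Schwarz refinements. Here $\alpha,\beta$ are scalars and $\vec b$ is a vector chosen to remove, as far as possible, the parts of $u_{ijkl}$ that can be computed exactly. Every inner product in this expansion is listed in Lemma~\ref{lem:3.1}, and $\sum u_{ijkl}^2$ is given by \eqref{eq:3.1}. After expansion the left side is a quadratic polynomial in $u,\alpha,\beta,\vec b$ with coefficients built from $A$, $B$, $C$, $Sf$, $f_3$, $tS^3$ and the quantities $\sum_l b_l\lambda_lD_l$ and $\sum_{ij}G_{ij}b_ib_j$.

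The steps are as follows.

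\emph{Step 1: the $Y$ and $Z$ coefficients.} I would optimize over $\alpha$ and $\beta$, which amounts to projecting out $Y$ and $Z$. We have $\langle u,Y\rangle=-tS^3$, $\langle u,Z\rangle=-C$, $\langle Y,Z\rangle=\frac43 Sf_3$, $|Y|^2=\frac{2(n+4)}3S^3$ and $|Z|^2=S^2$. Removing the $Z$ direction, and the $Y$ direction corrected for its overlap with $Z$, should produce the term $-\frac1{S^2}(C-\frac{tS}{n}f_3)^2$ together with the $O(t^2S^3/n)$ and $O(tS^3/n^2)$ corrections. These corrections change $(2+\frac3{2n})t$ into $(2+\frac3{8n}+\frac3{2n^2})t$. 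The coupling $\langle X,Y\rangle=\frac83Sf$ yields the $-\frac{4t}{n}uSf$ and $-\frac{4u}{n}tS^3$ terms. The terms $u^2|X|^2=2u^2Sf$ and $2u\langle u,X\rangle=u(-4A+2Sf+4f_3C/S+4tS^3/n)$ appear with reversed sign because the inequality is written as residual $\ge0$. That is, $\sum(u_{ijkl}-\cdots)^2\ge0$ is rearranged so that $3(A-2B)+4uA-(\frac32+2u-2u^2+\cdots)Sf-4uf_3C/S$ comes out.

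\emph{Step 2: the $V$ term.} The choice $\vec b$ proportional to $\lambda_lD_l$, namely $b_l=c\,\lambda_lD_l/(tS^2)$, is natural because $\langle u,V(b)\rangle=-6\sum b_l\lambda_lD_l$ and $\langle X,V(b)\rangle=4\sum b_l\lambda_lD_l$. Both become multiples of $D=\sum_l\lambda_l^2D_l^2$, with coefficients linear in $u$. The norm $|V(b)|^2=4tS^2|b|^2+12\sum G_{ij}b_ib_j$ contributes $4tS^2|b|^2\propto D/(tS^2)$. The $G$-part is $12\sum_{ij}(\sum_kh_{ijk}b_k)^2$, which is exactly the source of the term $\frac{16}{3t^2S^4}\sum_{ij}(\sum_kh_{ijk}\lambda_kD_k)^2$ with positive sign. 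Optimizing the constant $c$ (which depends on $u$) then gives the coefficient $-(\frac{56}{9t}+\frac{16}{3t}u)D/S^2$.

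\emph{Main obstacle.} I expect the difficulty to lie in getting these constants exactly, in particular $\frac{56}{9}$ and the $\frac3{8n}$ correction. They probably arise from not optimizing fully but fixing a convenient $c$ (for example $c=-\frac43$ or $-\frac23(1+\ldots)$), and from discarding a nonnegative cross term between $V(b)$ and $Y$ or $Z$. Those cross terms vanish by Lemma~\ref{lem:3.1}, which simplifies the computation. Also, $\langle u,V(b)\rangle$ and $\langle X,V(b)\rangle$ combine as $(-6+4u)$, so the $D$-coefficient must pick up a $u$-dependence matching $\frac{16}{3}u$.

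\emph{Fallback.} If the plain expansion does not reproduce the stated coefficients, I would split $V$ into its component along $\operatorname{span}\{X,Y,Z\}$ and an orthogonal part. I would then keep the $G$-quadratic form explicitly instead of bounding it by \eqref{eq:gram}, since the stated inequality retains it.
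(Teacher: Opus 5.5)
Your overall framework is the paper's. You expand a residual built from $u_{ijkl}$, $X$, $Y$, $Z$ and $V(\vec b)$ with $b_l\propto\lambda_lD_l$, using the inner products of Lemma~\ref{lem:3.1} and $\sum u_{ijkl}^2$ from \eqref{eq:3.1}. You also correctly see that the $G$-part of $|V(\vec b)|^2$ is the source of the $\Sigma$-term, where $\Sigma=\sum_{i,j}(\sum_kh_{ijk}\lambda_kD_k)^2$. However, the plan as written does not reach \eqref{eq:3.5}. Every constant there is the value of the expansion at specific, deliberately non-optimal coefficients, and your proposal either gets these wrong or leaves them open.

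(i) The sign of $X$ is wrong. With $u_{ijkl}+uX_{ijkl}$, the final inequality contains $+2u\langle u,X\rangle+u^2|X|^2$, i.e.\ $3(A-2B)-4uA+(2u^2+2u)Sf+4uf_3\frac CS+\cdots\ge0$. This is the statement with $u$ replaced by $-u$; no sign gets ``reversed''. You need $u_{ijkl}-uX_{ijkl}$, which is harmless since the argument never uses the sign of $u$. Also, the $-\frac{4u}{n}tS^3$ term comes from the $\frac{2tS^3}{n}$ part of $\langle u,X\rangle$, not from $\langle X,Y\rangle$.

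(ii) Optimizing the $Y$- and $Z$-coefficients does not produce $(C-\frac{tS}nf_3)^2$ and $(2+\frac3{8n}+\frac3{2n^2})t$. Already for $u_{ijkl}+\gamma Y_{ijkl}$ the optimal $\gamma$ is $\frac{3t}{2(n+4)}$, with gain $\frac{3}{2(n+4)}t^2S^3$ instead of $\frac{3(3n-4)}{8n^2}t^2S^3$. With $-uX$ present the optimum becomes $\frac{3}{2(n+4)}(t+\frac{8uf}{3S^2})$, which produces $u^2f^2/S$ terms absent from \eqref{eq:3.5}.

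(iii) In \eqref{eq:3.5} the coefficient of $\Sigma$ is independent of $u$, so $c$ cannot be a $u$-dependent optimizer. Since $|V(\vec b)|^2$ contributes $12c^2\Sigma/(t^2S^4)$, you are forced to take $c=-\frac23$ in your normalization, not $-\frac43$.

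The missing step is the explicit choice
\[
T_{ijkl}=u_{ijkl}-uX_{ijkl}+\frac{2}{3t}V_{ijkl}(\vec b)+\frac{3t}{4n}Y_{ijkl},\qquad b_l=\frac{\lambda_lD_l}{S^2},
\]
with only $Z$ treated optimally, through Cauchy--Schwarz: $|T|^2\ge\langle T,Z\rangle^2/|Z|^2$. The $Y$-coefficient $\frac{3t}{4n}$ is chosen so that $\langle T,Z\rangle=-C+\frac{3t}{4n}\cdot\frac43Sf_3=-C+\frac{tS}nf_3$, using $\langle X,Z\rangle=\langle V,Z\rangle=0$. This gives the last term of \eqref{eq:3.5}. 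Expanding $|T|^2$ then produces exactly the stated constants:
\begin{itemize}
\item $\frac{4}{3t}\langle u,V\rangle+\frac{4}{9t^2}\bigl(4tS^2|\vec b|^2\bigr)-\frac{4u}{3t}\langle X,V\rangle=-(\frac{56}{9t}+\frac{16u}{3t})\frac D{S^2}$ and $\frac{4}{9t^2}\cdot12\sum_{k,l}G_{kl}b_kb_l=\frac{16}{3t^2S^4}\Sigma$;
\item $\frac{3t}{2n}\langle u,Y\rangle+\frac{9t^2}{16n^2}|Y|^2=-\frac{3(3n-4)}{8n^2}t^2S^3$, which together with \eqref{eq:3.1} turns $(2+\frac3{2n})t$ into $(2+\frac3{8n}+\frac3{2n^2})t$;
\item $-\frac{3tu}{2n}\langle X,Y\rangle=-\frac{4tu}nSf$;
\item $-2u\langle u,X\rangle+u^2|X|^2=4uA+(2u^2-2u)Sf-4uf_3\frac CS-\frac{4u}ntS^3$.
\end{itemize}
Your fallback decomposition of $V$ is unnecessary, since $\langle V,Y\rangle=\langle V,Z\rangle=0$. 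An optimized residual would of course also imply \eqref{eq:3.5}, because the stated expression is the residual evaluated at these particular parameters. But recognizing that requires precisely this computation, so the choice of coefficients is the actual content of the proof.
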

\begin{proof}
Since
\[
\sum_{i,j,k,l}(u_{ijkl}-uX_{ijkl}+\dfrac{2}{3t}V_{ijkl}(b)+\dfrac{3t}{4n} Y_{ijkl})Z_{ijkl}
= -C+\frac{tS}{n}f_3, 
\]
we obtain from Cauchy inequality
\begin{equation}
\sum_{i,j,k,l}(u_{ijkl}-uX_{ijkl}+\dfrac{2}{3t}V_{ijkl}(b)+\dfrac{3t}{4n} Y_{ijkl})^2 \geq \dfrac{1}{S^2}(C-\frac{tS}{n}f_3)^2.
\end{equation}
Taking
$b_l=\frac{\lambda_lD_l}{S^2}$, from the lemma \ref{lem:3.1}, 
a long and direct computation yields
\begin{equation}\label{eq:3.6}
\begin{aligned}
\sum_{i,j,k,l}u_{ijkl}^2&\geq (\dfrac{4t}{n}u-2u^2+2u)Sf-4uA+4uf_3\dfrac{C}S\\
 &+(\dfrac{56}{9t}+\dfrac{16}{3t}u)\dfrac{D}{S^2}-\dfrac{16}{3t^2S^4}\sum_{i,j}(\sum_{k}h_{ijk}\lambda_kD_k)^2\\
 &+\bigl(\dfrac{3(3n-4)}{8n^2}t^2+\dfrac{4u}{n}t\bigr)S^3
 +\dfrac{1}{S^2}(C-\frac{tS}{n}f_3)^2.
\end{aligned}
\end{equation}
(\ref{eq:3.1}) and (\ref{eq:3.6}) give
\begin{equation*}
\begin{aligned}
 &3(A-2B)+4uA-(\frac32+\dfrac{4t}{n}u-2u^2+2u)Sf -4uf_3\dfrac{C}S\\
 &+\bigl[(2+\frac{3}{8n}+\dfrac{3}{2n^2})t-(1+\frac{3}{n}+\dfrac{4u}{n})\bigr]tS^3
 -(\dfrac{56}{9t}+\dfrac{16}{3t}u)\dfrac{D}{S^2}\\
 &+\dfrac{16}{3t^2S^4}\sum_{i,j}(\sum_{k}h_{ijk}\lambda_kD_k)^2
 -\dfrac{1}{S^2}(C-\frac{tS}{n}f_3)^2\geq 0.
\end{aligned}
\end{equation*}
\end{proof}
Following Yang-Cheng \cite{yc3}, take
\begin{equation}\label{eq:fixed-function}
 F=\frac{S}{4}f_4-\frac{7}{30}f_3^2.
\end{equation}
Since $X:M^n\to\mathbb S^{n+1}(1)$ is a complete  minimal hypersurface, 
and  $S$ is constant, we know that the Ricci curvature is bounded from below according to  the Gauss equations.
Since $f_4\le S^2$ and $f_3^2\le S^3$, the function $F$ is bounded.
By applying  the Generalized Maximum Principle of  Omori \cite{o} and Yau \cite{y} to the function $F$ in \eqref{eq:fixed-function},  
we know that 
there exists a sequence of $\{p_m\}_{m=1}^{\infty} \subset  M^n$  such that 
\begin{equation}\label{eq:3.8}
\lim_{m\to \infty}F(p_m)=\sup F,  \quad \lim_{m\to \infty}|\nabla F(p_m)|=0,  \quad \limsup_{m\to \infty}\Delta F(p_m)\leq 0.
\end{equation}
Since $S$ is constant, we know that, for any $i, j, k, l$,  in view of (\ref{eq:2.12}),
 $\{\lambda_i(p_m)\}$, $\{h_{ijk}(p_m)\}$ and $\{h_{ijkl}(p_m)\}$ are bounded sequences.
Thus, we can assume,  for any $i, j, k, l$, 
 $$
 \lim_{m\to\infty}\lambda_i(p_m)=\bar{\lambda}_i, \ \  \lim_{m\to\infty} h_{ijk}(p_m) =\bar{h}_{ijk},\quad
 \lim_{m\to\infty} h_{ijkl}(p_m)=\bar{h}_{ijkl}
 $$
 if necessary we can take a subsequence.
All of the following computations are made for $\bar{\lambda}_i$, $\bar{h}_{ijk}$ and $\bar{h}_{ijkl}$. For simplicity, we
 omit $\bar{ }$.  
 We should emphasize that from now on,  our symbols denote those ones under limit.
The vectors $\vec\psi$ and $\vec\varphi_*$ are defined by
\eqref{eq:projection-definition} using the limiting principal curvatures.\\
 Since $tS=S-n$,  if $t>\frac13$, we know $S>n+\frac12n$. We can assume $0<t\leq\frac13$.\\
Define $\mathcal R_u$, $\mathcal E_u$ and $\mathcal G_u$ by, respectively,
\begin{equation}
\begin{aligned}
  \mathcal R_u=&-(\dfrac{56}{9t}+\dfrac{16}{3t}u)\dfrac{D}{S^2}+\frac{21}{5}\left(4+\frac{8u}{3}\right)\dfrac{\sum_lD_l^2}S\\
 &+\dfrac{16}{3t^2S^4}\sum_{i,j}(\sum_{k}h_{ijk}\lambda_kD_k)^2
 -4(5+\frac{4u}{3})\sum_l\frac{D_l^2}{S+\frac{2n}{n+2}\lambda_l^2},
\end{aligned}
\end{equation}
\begin{equation}
\begin{aligned}
\mathcal E_u={}
 &\bigl[-\frac32+(4+\dfrac{8u}3-\dfrac{4u}{n})t+2u^2-2u\bigr]Sf -\frac{2}{5}\left(4+\frac{8u}{3}\right)tf_3^2\\
&+\left(\frac{14}{5}\bigl(4+\frac{8u}{3}\bigr)-4u\right)\dfrac{f_3}SC  -\dfrac{15+4u}t \frac{C^2}{S^2}  
 -\dfrac{1}{S^2}(C-\frac{tS}{n}f_3)^2,\\
\end{aligned}
\end{equation}
$$
\mathcal G_{u}=\bigl[(2+\frac{3}{8n}+\dfrac{3}{2n^2})t-(1-\frac{1}{n}+\dfrac{4u}{3n})\bigr]tS^3.
$$
\begin{lemma}
For the above limiting quantities and $u>0$, we have
\begin{equation}
0\leq \mathcal R_u+\mathcal E_u+\mathcal G_u.
\end{equation}
 \end{lemma}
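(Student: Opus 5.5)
The inequality $0\le\mathcal R_u+\mathcal E_u+\mathcal G_u$ should follow from (\ref{eq:3.5}) once $A$ and $B$ are eliminated. I will do this with two more inputs: the second-order condition $\limsup\Delta F(p_m)\le0$ from (\ref{eq:3.8}), and the lower bound for $A+2B$ in Lemma~\ref{lem:2.2}. All three inequalities are first established at the points $p_m$. They involve only polynomial expressions in $\lambda_i,h_{ijk},h_{ijkl}$, with $t,S$ fixed, and these have bounded components. So they pass to the limit, and I work with the limiting quantities throughout. (The gradient condition $\nabla F=0$ is not needed here; it enters only later, through Lemma~\ref{lem:2.3}.)

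\emph{Step 1: compute $\Delta F$.} From (\ref{eq:2.14}), $\tfrac S4\Delta f_4=S[(n-S)f_4+2A+B]$ and $\Delta f_3=3(n-S)f_3+6C$. Moreover $|\nabla f_3|^2=9\sum_kD_k^2$, since $(f_3)_k=3D_k$. Hence, using $n-S=-tS$,
\[
\Delta F=-tS^2f_4+\tfrac75tSf_3^2+S(2A+B)-\tfrac{14}{5}f_3C-\tfrac{21}{5}\sum_kD_k^2 .
\]
In the limit $\Delta F\le0$, which gives the upper bound
\[
2A+B\le tSf_4-\tfrac75tf_3^2+\tfrac{14}{5}\tfrac{f_3C}{S}+\tfrac{21}{5}\tfrac{\sum_lD_l^2}{S}.
\]

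\emph{Step 2: split the $A,B$ part of (\ref{eq:3.5}).} The combination appearing there is $3(A-2B)+4uA=(3+4u)A-6B$. I write it as $\alpha(2A+B)-\beta(A+2B)$. Solving $2\alpha-\beta=3+4u$ and $\alpha-2\beta=-6$ gives $\alpha=4+\tfrac{8u}3>0$ and $\beta=5+\tfrac{4u}3>0$, exactly the coefficients appearing in $\mathcal R_u$ and $\mathcal E_u$. I bound $2A+B$ from above by Step 1 and $A+2B$ from below by Lemma~\ref{lem:2.2}. This yields
\[
(3+4u)A-6B\le\alpha\Bigl[tSf_4-\tfrac75tf_3^2+\tfrac{14}{5}\tfrac{f_3C}{S}+\tfrac{21}{5}\tfrac{\sum_lD_l^2}{S}\Bigr]-\beta\Bigl[\tfrac{3C^2}{tS^2}+4\sum_l\tfrac{D_l^2}{S+\frac{2n}{n+2}\lambda_l^2}\Bigr].
\]

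\emph{Step 3: substitute and collect terms.} Insert this into (\ref{eq:3.5}) and replace $f_4=f+\frac{f_3^2}{S}+\frac{S^2}{n}$. Then the terms should sort as follows.
\begin{itemize}
\item The $Sf$ coefficient becomes $-\tfrac32+\alpha t-\tfrac{4u}{n}t+2u^2-2u$.
\item The $tf_3^2$ coefficient is $\alpha-\tfrac75\alpha=-\tfrac25\alpha$.
\item The $f_3C/S$ coefficient is $\tfrac{14}{5}\alpha-4u$.
\item The $C^2/S^2$ coefficient is $-3\beta/t=-(15+4u)/t$.
\end{itemize}
Together with the $-(C-\tfrac{tS}{n}f_3)^2/S^2$ term these give $\mathcal E_u$. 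The term $\alpha tS^3/n$ changes $-(1+\tfrac3n+\tfrac{4u}{n})$ into $-(1+\tfrac3n+\tfrac{4u}n-\tfrac4n-\tfrac{8u}{3n})=-(1-\tfrac1n+\tfrac{4u}{3n})$, which gives $\mathcal G_u$. The remaining $D$-terms are $\tfrac{21}{5}\alpha\sum D_l^2/S$, $-4\beta\sum D_l^2/(S+\cdots)$, and the $D/S^2$ and $\sum_{i,j}(\sum_kh_{ijk}\lambda_kD_k)^2$ terms carried over from (\ref{eq:3.5}); these form $\mathcal R_u$.

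The only delicate point is the bookkeeping in Step 3, namely checking that every coefficient matches the stated definitions of $\mathcal R_u,\mathcal E_u,\mathcal G_u$. The structural choice of $\alpha,\beta$ is forced by the linear system in Step 2, and positivity of $\alpha$ and $\beta$ is what makes the directions of the two inequalities combine correctly.
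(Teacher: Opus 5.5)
Your proposal is correct and follows essentially the paper's own route. The paper likewise subtracts $(4+\frac{8u}{3})\frac1S$ times the limiting inequality $\Delta F\le0$ from (\ref{eq:3.5}), which leaves exactly $-(5+\frac{4u}{3})(A+2B)$, and then applies Lemma~\ref{lem:2.2}. Your $\alpha,\beta$ splitting is the same computation, and your expression for $\Delta F$ and all the collected coefficients of $\mathcal R_u$, $\mathcal E_u$, $\mathcal G_u$ check out.
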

 \begin{proof}
 According to (\ref{eq:2.14}) and  (\ref{eq:3.8}), we infer
\begin{equation}\label{eq:3.12}
\begin{aligned}
&\lim_{m\to \infty}|\nabla F(p_m)|^2=\sum_k\left[\sum_i\left(S\lambda_i^3-\dfrac75f_3\lambda_i^2\right)h_{iik}\right]^2=0,\\
&\limsup_{m\to \infty}\Delta F(p_m)= -tS^2f_4+S(2A+B)-\dfrac75\bigl(-tSf_3^2+2f_3C +3\sum_lD_l^2\bigr)\leq 0.
\end{aligned}
\end{equation}
$(\ref{eq:3.5}) -(4+\frac{8u}{3})\frac{1}{S}\times$ the second formula in  (\ref{eq:3.12}) yields, for $u>0$, 
\begin{equation}\label{eq:3.13}
\begin{aligned}
 &-(5+\frac{4u}{3})(A+2B)+\bigl[-\frac32+(4+\dfrac{8u}3-\dfrac{4u}{n})t+2u^2-2u\bigr]Sf \\
 &+\bigl[(2+\frac{3}{8n}+\dfrac{3}{2n^2})t-(1-\frac{1}{n}+\dfrac{4u}{3n})\bigr]tS^3
 -\frac{2}{5}\left(4+\frac{8u}{3}\right)tf_3^2\\
 &+\left(\frac{14}{5}\bigl(4+\frac{8u}{3}\bigr)-4u\right)\dfrac{f_3}SC 
 -(\dfrac{56}{9t}+\dfrac{16}{3t}u)\dfrac{D}{S^2}+\frac{21}{5}\left(4+\frac{8u}{3}\right)\dfrac{\sum_lD_l^2}S\\
 &+\dfrac{16}{3t^2S^4}\sum_{i,j}(\sum_{k}h_{ijk}\lambda_kD_k)^2
 -\dfrac{1}{S^2}(C-\frac{tS}{n}f_3)^2\geq 0.
\end{aligned}
\end{equation}
Hence, we conclude according to the lemma \ref{lem:2.2} and (\ref{eq:3.13})
\begin{equation*}
\begin{aligned}
 &0\leq \bigl[-\frac32+(4+\dfrac{8u}3-\dfrac{4u}{n})t+2u^2-2u\bigr]Sf-\dfrac{15+4u}t \frac{C^2}{S^2} \\
 &+\bigl[(2+\frac{3}{8n}+\dfrac{3}{2n^2})t-(1-\frac{1}{n}+\dfrac{4u}{3n})\bigr]tS^3
 -\frac{2}{5}\left(4+\frac{8u}{3}\right)tf_3^2\\
 &+\left(\frac{14}{5}\bigl(4+\frac{8u}{3}\bigr)-4u\right)\dfrac{f_3}SC 
 -(\dfrac{56}{9t}+\dfrac{16}{3t}u)\dfrac{D}{S^2}+\frac{21}{5}\left(4+\frac{8u}{3}\right)\dfrac{\sum_lD_l^2}S\\
 &+\dfrac{16}{3t^2S^4}\sum_{i,j}(\sum_{k}h_{ijk}\lambda_kD_k)^2
 -\dfrac{1}{S^2}(C-\frac{tS}{n}f_3)^2-4(5+\frac{4u}{3})\sum_l\frac{D_l^2}{S+\frac{2n}{n+2}\lambda_l^2}.
\end{aligned}
\end{equation*}
The definitions of $\mathcal R_u$, $\mathcal E_u$ and $\mathcal G_u$ give
$$
0\leq \mathcal R_u+\mathcal E_u+\mathcal G_u.
$$
 \end{proof}
 \begin{lemma}\label{lem:3.4}
For the above limiting quantities, if $|\vec\varphi_*|>0$, then, for $u>0$,
\begin{equation}
\begin{aligned}
 &\mathcal R_u\leq -(\dfrac{8}{9}+\dfrac{16}{3}u+\dfrac{32}{3(n+2)})\dfrac{D}{tS^2}+\frac{21}{5}\left(4+\frac{8u}{3}\right)\dfrac{\sum_lD_l^2}S\\
 &-\dfrac{32n}{3(n+2)}\dfrac{D^2}{t^2S^7|{\vec \varphi_{*}}|^2}
 -4(5+\frac{4u}{3})\sum_l\frac{D_l^2}{S+\frac{2n}{n+2}\lambda_l^2}.\\
\end{aligned}
\end{equation}
 \end{lemma}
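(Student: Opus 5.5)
The plan is to bound only the single term
\[
 \frac{16}{3t^2S^4}\sum_{i,j}\Bigl(\sum_k h_{ijk}\lambda_kD_k\Bigr)^2
\]
in $\mathcal R_u$. The other three terms of $\mathcal R_u$ already appear unchanged on the right-hand side of the claimed inequality, except that the coefficient of $D/(tS^2)$ has to drop from $\frac{56}{9}+\frac{16}{3}u$ to $\frac89+\frac{16}{3}u+\frac{32}{3(n+2)}$.

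\emph{Step 1: the gradient condition holds at the limit point.} By the first formula of (\ref{eq:3.12}), the limiting quantities satisfy $\sum_i(S\lambda_i^3-\frac75f_3\lambda_i^2)h_{iik}=0$ for every $k$. This is exactly the hypothesis of Lemma \ref{lem:2.3}. Hence (\ref{eq:gram}) is available. So is the representation $D_k=S\sum_i(\vec\varphi_*)_ih_{iik}$ obtained in the proof of that lemma.

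\emph{Step 2: apply (\ref{eq:gram}).} Take $w_k=\lambda_kD_k$, so that $\sum_kw_k^2=D$ and $(h(\vec w))_k=\lambda_k^2D_k$. Then (\ref{eq:gram}) gives
\[
 |B(w)|^2\le \frac{ntS^3D-2n|B(v)|^2}{(n+2)S},\qquad v_k=\lambda_k^2D_k .
\]

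\emph{Step 3: lower bound for $|B(v)|^2$.} This is the only genuinely new step. Pair $B(v)$ in the Frobenius inner product with the diagonal matrix $K=\diag(S(\vec\varphi_*)_1,\ldots,S(\vec\varphi_*)_n)$. Using the representation of $D_k$ from Step 1,
\[
 \langle B(v),K\rangle=\sum_kv_k\,S\sum_i(\vec\varphi_*)_ih_{iik}=\sum_kv_kD_k=\sum_k\lambda_k^2D_k^2=D .
\]
Since $|K|^2=S^2|\vec\varphi_*|^2>0$, the Cauchy inequality yields $|B(v)|^2\ge D^2/(S^2|\vec\varphi_*|^2)$. This is where the assumption $|\vec\varphi_*|>0$ is used.

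\emph{Step 4: combine.} Inserting Step 3 into Step 2 and multiplying by $\frac{16}{3t^2S^4}$ gives
\[
 \frac{16}{3t^2S^4}|B(w)|^2\le \frac{16n}{3(n+2)}\frac{D}{tS^2}-\frac{32n}{3(n+2)}\frac{D^2}{t^2S^7|\vec\varphi_*|^2}.
\]
Now use $\frac{16n}{3(n+2)}=\frac{16}{3}-\frac{32}{3(n+2)}$ and $\frac{56}{9}-\frac{16}{3}=\frac89$. Adding this bound to the other terms of $\mathcal R_u$ produces exactly the coefficient $-(\frac89+\frac{16}{3}u+\frac{32}{3(n+2)})$ of $D/(tS^2)$, together with the new negative term in $D^2$. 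The remaining terms of $\mathcal R_u$ are carried over unchanged, which gives the stated inequality.

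I expect Step 3 to be the main obstacle: the point is to pick the test matrix $K$ built from $\vec\varphi_*$, so that the gradient relation from the maximum principle converts $\langle B(v),K\rangle$ into $D$. The rest is the bookkeeping of Step 4.
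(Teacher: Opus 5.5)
Your proposal is correct and is essentially the paper's proof: the paper also applies \eqref{eq:gram} with $w_k=\lambda_kD_k$, uses the gradient condition to write $D_k=S\sum_i(\vec\varphi_*)_ih_{iik}$, bounds $\sum_{i,j}(\sum_kh_{ijk}\lambda_k^2D_k)^2$ from below by $D^2/(S^2|\vec\varphi_*|^2)$ with Cauchy's inequality, and then collects the coefficients exactly as you do. Your Frobenius pairing with $K=\diag(S(\vec\varphi_*)_1,\ldots,S(\vec\varphi_*)_n)$ is the same Cauchy step written differently: the paper applies Cauchy to the diagonal entries and then uses $\sum_i(\sum_kh_{iik}w_k)^2\le\sum_{i,j}(\sum_kh_{ijk}w_k)^2$.
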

 \begin{proof}
In view of the lemma \ref{lem:2.3}, taking $w_k=\lambda_kD_k$, we obtain
\begin{equation*}
\begin{aligned}
 &(n+2)S\sum_{i,j}(\sum_kh_{ijk}\lambda_kD_k)^2+2n\sum_{i,j}(\sum_kh_{ijk}\lambda_k^2D_k)^2\le ntS^3\sum_k\lambda_k^2D_k^2,\\
 \end{aligned}
\end{equation*}
that is, 
\begin{equation}\label{eq:3.15}
\begin{aligned}
 &\sum_{i,j}(\sum_kh_{ijk}\lambda_kD_k)^2\le \dfrac{nt}{n+2}S^2D-\dfrac{2n}{n+2}\dfrac1S\sum_{i,j}(\sum_kh_{ijk}\lambda_k^2D_k)^2,\\
 \end{aligned}
\end{equation}
and
\begin{equation}\label{eq:3.16}
 \sum_i\left(\frac{n+2}{n}S+2\lambda_i^2\right)D_i^2\le tS^5|{\vec \varphi}_*|^2.
 \end{equation}
Since
\[
 \sum_i h_{iik}=\sum_i\lambda_i h_{iik}=\sum_i b_{ii}h_{iik}=0,
\]
we obtain from \eqref{eq:projection-definition}, for any $w_k$,
\begin{equation*}
\sum_kw_kD_k=S\sum _i{(\vec \varphi_{*})}_{i}\sum_kh_{iik}w_k.
\end{equation*}
Hence, we get 
\begin{equation*}
(\sum_kw_kD_k)^2\leq S^2|{\vec \varphi_{*}}|^2\sum_{i}(\sum_kh_{iik}w_k)^2\leq S^2|{\vec \varphi_{*}}|^2\sum_{i,j}(\sum_kh_{ijk}w_k)^2.
\end{equation*}
Taking $w_k=\lambda_k^2D_k$, we have
\begin{equation}\label{eq:3.17}
D^2=(\sum_k\lambda_k^2D_k^2)^2\leq S^2|{\vec \varphi_{*}}|^2\sum_{i,j}(\sum_kh_{ijk}\lambda_k^2D_k)^2.
\end{equation}
According to (\ref{eq:3.15}) and (\ref{eq:3.17}), we obtain
\begin{equation}\label{eq:3.18}
\begin{aligned}
 &\sum_{i,j}(\sum_kh_{ijk}\lambda_kD_k)^2\le \dfrac{nt}{n+2}S^2D-\dfrac{2n}{n+2}\dfrac{D^2}{S^3|{\vec \varphi_{*}}|^2},\\
 \end{aligned}
\end{equation}
Thus, we conclude, from  the definition of $\mathcal R_u$, (\ref{eq:3.18}) and 
$\frac{56}{9}+\frac{16u}{3}-\frac{16n}{3(n+2)}
 =\frac89+\frac{16u}{3}+\frac{32}{3(n+2)}$,
\begin{equation*}
\begin{aligned}
 &\mathcal R_u\leq -(\dfrac{8}{9}+\dfrac{16}{3}u+\dfrac{32}{3(n+2)})\dfrac{D}{tS^2}+\frac{21}{5}\left(4+\frac{8u}{3}\right)\dfrac{\sum_lD_l^2}S\\
 &-\dfrac{32n}{3(n+2)}\dfrac{D^2}{t^2S^7|{\vec \varphi_{*}}|^2}
 -4(5+\frac{4u}{3})\sum_l\frac{D_l^2}{S+\frac{2n}{n+2}\lambda_l^2}.\\
\end{aligned}
\end{equation*}
\end{proof}

\begin{lemma}\label{lem:3.5}
For the above limiting quantities with $0<t\le1/3$, the following estimates hold.
For $4\leq n\leq 12$, we have 
\begin{equation}
\begin{aligned}
0&\leq \mathcal R_{\frac12}+\mathcal E_{\frac12}+\mathcal G_{\frac12}\\
&\leq  \left(-\frac{2}{t}+\frac{16}{3}-\frac{2}{n}\right)tSf+\biggl[-\frac{32}{15}-\frac{t}{n^2}
+\frac{\left(\frac{97}{15}+\frac{t}{n}\right)^2}{17+t}\biggr]tf_3^2\\
& +\bigl[(2+\frac{3}{8n}+\dfrac{3}{2n^2})t-1+\frac{1}{3n}\bigr]tS^3+ \dfrac32S^3t|{\vec \varphi_{*}}|^2.
\end{aligned}
\end{equation}
For $n\geq 4$, we have 
\begin{equation}\label{eq:3.20}
\begin{aligned}
0&\leq \mathcal R_{\frac12}+\mathcal E_{\frac12}+\mathcal G_{\frac12}\\
&\leq  \left(-\frac{2}{t}+\frac{16}{3}-\frac{2}{n}\right)tSf+\biggl[-\frac{32}{15}-\frac{t}{n^2}
+\frac{\left(\frac{97}{15}+\frac{t}{n}\right)^2}{17+t}\biggr]tf_3^2\\
& +\bigl[(2+\frac{3}{8n}+\dfrac{3}{2n^2})t-1+\frac{1}{3n}\bigr]tS^3+ \frac{12}5S^3t|{\vec \varphi_{*}}|^2
\end{aligned}
\end{equation}
and
\begin{equation}\label{eq:3.21}
\begin{aligned}
0&\leq \mathcal R_{\frac14}+\mathcal E_{\frac14}+\mathcal G_{\frac14}\\
&\leq   \bigl(-\frac{15}{8t}+\frac{14}{3}-\frac{1}{n}\bigr)tSf
 +\left[-\frac{28}{15}-\frac{t}{n^2}+ \dfrac{(\frac{181}{30}+\frac{t}{n})^2}{16+t}\right]tf_3^2\\
&+\bigl[(2+\frac{3}{8n}+\dfrac{3}{2n^2})t-1+\frac{2}{3n}\bigr]tS^3+\dfrac85S^3t|{\vec \varphi_{*}}|^2.
\end{aligned}
\end{equation}

\end{lemma}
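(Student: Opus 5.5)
The lower bound $0\le\mathcal R_u+\mathcal E_u+\mathcal G_u$ is the previous lemma, so only the upper bounds need work. I would bound the three pieces separately. The piece $\mathcal G_u$ is already explicit. For $u=\tfrac12$ one has $1-\frac1n+\frac{4u}{3n}=1-\frac1{3n}$, and for $u=\tfrac14$ one has $1-\frac1n+\frac{1}{3n}=1-\frac{2}{3n}$. These are exactly the $tS^3$ coefficients in the statement.

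\emph{The $\mathcal E_u$ part.} Put $C$ aside and note that $\mathcal E_u$ is a concave quadratic in $C$. I would expand $-\frac1{S^2}(C-\frac{tS}{n}f_3)^2$ and collect terms. With $u=\tfrac12$ the $C^2$ coefficient is $-\frac{17}{t S^2}-\frac1{S^2}=-\frac{17+t}{tS^2}$. The linear coefficient is $\bigl(\frac{14}{5}\cdot\frac{16}{3}-2+\frac{2t}{n}\bigr)\frac{f_3}{S}=2\bigl(\frac{97}{15}+\frac tn\bigr)\frac{f_3}S$. The constant term is $-\frac{32}{15}tf_3^2-\frac{t^2}{n^2}f_3^2$.

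Maximizing over $C$ gives $\frac{(\frac{97}{15}+\frac tn)^2}{17+t}tf_3^2$. The $Sf$ coefficient is $-\frac32+(\frac{16}{3}-\frac2n)t$, which equals $(-\frac{2}{t}+\frac{16}{3}-\frac2n)tSf$ once we account for the $-\frac12 Sf$ contributed by $2u^2-2u$. I will check this bookkeeping, because the $\frac32S^3t|\vec\varphi_*|^2$ term might instead absorb part of it.

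The same computation with $u=\tfrac14$ gives $C^2$ coefficient $-(16+t)/t$, linear factor $\frac{181}{30}+\frac tn$, constant $-\frac{28}{15}$, and $Sf$ coefficient $-\frac{15}{8}+(\frac{14}{3}-\frac1n)t$.

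\emph{The $\mathcal R_u$ part (main obstacle).} The goal is $\mathcal R_u\le c\,S^3t|\vec\varphi_*|^2$, with $c=\frac32$ for $4\le n\le12$ and $c=\frac{12}5$ for general $n$ when $u=\frac12$, and $c=\frac85$ when $u=\frac14$. I would start from Lemma~\ref{lem:3.4} (the case $\vec\varphi_*=0$ gives $D_l=0$ by Lemma~\ref{lem:2.3}, hence $\mathcal R_u=0$).

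Write $x_l=D_l^2$ and $s_l=\lambda_l^2/S\in[0,1)$. Then $\mathcal R_u$ is bounded by
\[
\sum_l x_l\Bigl(\tfrac{\alpha}{S}-\tfrac{\beta s_l}{tS}-\tfrac{\gamma}{S(1+\frac{2n}{n+2}s_l)}\Bigr)-\tfrac{32n}{3(n+2)}\tfrac{D^2}{t^2S^7|\vec\varphi_*|^2}.
\]
Here $\alpha=\frac{21}{5}(4+\frac{8u}3)$, $\beta=\frac89+\frac{16u}3+\frac{32}{3(n+2)}$ and $\gamma=4(5+\frac{4u}3)$.

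The pointwise coefficient is positive only for small $s_l$. So I would combine the following ingredients:
\begin{itemize}
\item the linear term is bounded by $\max_l(\ldots)$ times $\sum_l x_l$;
\item the budget \eqref{eq:budget} gives $\frac{n+2}{n}S\sum x_l+2S\,D/S\le tS^5|\vec\varphi_*|^2$;
\item the negative term $-\kappa D^2/(t^2S^7|\vec\varphi_*|^2)$ is used through $-\kappa y^2/m+ \mu y\le \mu^2 m/(4\kappa)$ in the variable $y=D/S$.
\end{itemize}

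Concretely, I would set $\sum x_l=X$ and $D=S\,Y$ with $\frac{n+2}{n}X+2Y\le tS^3|\vec\varphi_*|^2$. I would then maximize the resulting expression, which is linear in $X$ and concave in $Y$, over this region. The $n$-dependence through $\frac{n+2}{n}$ and $\frac{32}{3(n+2)}$ explains the better constant $\frac32$ for $n\le12$ and the weaker $\frac{12}5$ uniformly.

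The delicate point is that $t\le\frac13$ makes the $-\beta s/t$ term large, which helps. However, the $\gamma$ term must be handled via $\frac{1}{1+\frac{2n}{n+2}s}\ge 1-\frac{2n}{n+2}s$, so it reduces to a one-variable optimization in $s\in[0,1]$ and $t\in(0,\frac13]$. I expect this optimization, and verifying that the resulting constants are at most $\frac32$, $\frac{12}5$ and $\frac85$ in the stated ranges of $n$, to be the main computational work.
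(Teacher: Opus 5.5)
Your treatment of $\mathcal G_u$ and $\mathcal E_u$ (completing the square in $C$) is correct and matches the paper. Your plan for $\mathcal R_u$ also follows the paper's outline:
\begin{itemize}
\item linearize $\frac{1}{1+cs_l}$, with $c=\frac{2n}{n+2}$ and $s_l=\lambda_l^2/S$;
\item pass to $X'=\sum_lD_l^2/(tS^4|\vec\varphi_*|^2)$ and $z=D/(tS^5|\vec\varphi_*|^2)$;
\item use the budget $\frac{n+2}{n}X'+2z\le1$ (you wrote $tS^3$ where $tS^4$ is meant);
\item keep the $-z^2/t$ term and take $t=\frac13$.
\end{itemize}

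The gap is the linearization you commit to, $\frac{1}{1+cs}\ge1-cs$, which is the tangent of $1/v$ at $v=1$. Your premise that the pointwise coefficient is positive only for small $s_l$ is backwards. At $s=0$ it equals $(\alpha-\gamma)/S$, with $\alpha-\gamma=-\frac{4}{15}$ for $u=\frac12$ and $-\frac{26}{15}$ for $u=\frac14$. At $t=\frac13$ it is largest at moderate $s$ (about $s\approx\frac12$ for large $n$), which is exactly where $1-cs$ badly underestimates $\frac{1}{1+cs}$.

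For $u=\frac12$ and $t=\frac13$, your linearization yields
\[
\frac{\mathcal R_{1/2}}{S^3t|\vec\varphi_*|^2}\le-\frac{4}{15}X'+\frac{104n-160}{3(n+2)}\,z-\frac{32n}{n+2}\,z^2 .
\]
Take $X'=\frac{n}{2(n+2)}$ and $z=\frac14$. This point satisfies the budget and also $z\le\frac{n-1}{n}X'$, which comes from $\lambda_l^2\le\frac{n-1}{n}S$. There the right side equals $\frac{392n-800}{60(n+2)}$. This exceeds $\frac32$ for all $n\ge4$, exceeds $\frac{12}{5}$ for $n\ge5$, and tends to $\frac{98}{15}$. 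For $u=\frac14$ the same point gives $\frac{368n-680}{60(n+2)}>\frac85$ for all $n\ge4$. So no argument passing through this bound can reach the stated constants.

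Bounding each coefficient by its maximum over $s$ (your first bullet) also fails for large $n$. It gives roughly $5.7$ instead of $\frac{12}{5}$.

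The missing idea is to take the tangent line of the convex function $1/v$ at an interior point, $\frac1v\ge2k-k^2v$. Choose $kv=\frac34(1+cs)$ for $u=\frac12$ and $kv=\frac{13}{20}(1+cs)$ for $u=\frac14$. This gives $\frac{1}{1+cs}\ge\frac{15}{16}-\frac{9}{16}cs$ and $\frac{1}{1+cs}\ge\frac{351}{400}-\frac{169}{400}cs$, which are \eqref{eq:3.23} and \eqref{eq:3.24}.

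These choices make the coefficient of $X'$ positive: $\frac{23}{20}$, respectively $\frac{22}{25}$. The budget then eliminates $X'$ via $X'\le\frac{n}{n+2}(1-2z)$. With $t\le\frac13$ you get two concave quadratics in $z$:
\[
\frac{n}{n+2}\Bigl(\frac{23}{20}+\frac{188}{15}z-32z^2\Bigr)-\frac{160z}{3(n+2)}\le\frac{12}{5},
\]
\[
\frac{n}{n+2}\Bigl(\frac{22}{25}+\frac{48}{5}z-32z^2\Bigr)-\frac{136z}{3(n+2)}\le\frac85 .
\]
For $n\le12$ the first expression is increasing in $n$. At $n=12$ it is at most $1+7z-\frac{49}{2}z^2\le\frac32$, which gives the sharper bound.
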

\begin{proof}
If $|\vec\varphi_*|=0$, \eqref{r:D} gives $D_k=0$ for all $k$.
The definition of $\mathcal R_u$ then gives $\mathcal R_u=0$.
We first estimate $\mathcal R_u$ when $|\vec\varphi_*|>0$.
For $v>0$, and any $k$
\[
 \frac1v-2k+k^2v=\frac{(1-kv)^2}{v}\geq 0.
\]
Taking $v=\frac{n+2}{n}S+2\lambda_i^2$, 
we obtain
$$
\sum_i\biggl[\dfrac{1}{\frac{n+2}nS+2\lambda_i^2}-2k+k^2(\frac{n+2}nS+2\lambda_i^2)\biggr]\dfrac{(n+2)D_i^2}{n}\geq 0, \\
$$
that is, 
\begin{equation}\label{eq:3.22}
\begin{aligned}
& \sum_i\frac{(n+2)D_i^2}{(n+2)S+2n\lambda_i^2}
 \ge{}\left(\frac{2(n+2)k}{n}-\frac{(n+2)^2k^2S}{n^2}\right)\sum_iD_i^2-\frac{2(n+2)k^2}{n}D.
\end{aligned}
\end{equation}
For $u=1/2$, taking  $k=3n/[4(n+2)S]$, we infer from (\ref{eq:3.22})
\begin{equation}\label{eq:3.23}
 \sum_i\frac{(n+2)D_i^2}{(n+2)S+2n\lambda_i^2}\ge\frac{15}{16}\dfrac{\sum_iD_i^2}S-\frac{9n}{8(n+2)}\dfrac{D}{S^2}.
\end{equation}
For $u=1/4$, taking
$k=13n/[20(n+2)S]$, we have 
\begin{equation}\label{eq:3.24}
 \sum_i\frac{(n+2)D_i^2}{(n+2)S+2n\lambda_i^2}\ge\frac{351}{400}\dfrac{\sum_iD_i^2}S-\frac{169n}{200(n+2)}\dfrac{D}{S^2}.
\end{equation}
Hence, we obtain from the lemma \ref{lem:3.4} and the above formulas
\begin{equation}\label{eq:3.25}
\begin{aligned}
 &\mathcal R_{\frac12}\leq \frac{23}{20}\dfrac{\sum_lD_l^2}S+\dfrac{51n}{2(n+2)}\dfrac{D}{S^2} -(\dfrac{32}{9}+\dfrac{32}{3(n+2)})\dfrac{D}{tS^2}
 -\dfrac{32n}{3(n+2)}\dfrac{D^2}{t^2S^7|{\vec \varphi_{*}}|^2}\\
\end{aligned}
\end{equation}
and
\begin{equation}\label{eq:3.26}
\begin{aligned}
 &\mathcal R_{\frac14}\leq \frac{22}{25}\dfrac{\sum_lD_l^2}S+\dfrac{1352n}{75(n+2)}\dfrac{D}{S^2} -(\dfrac{20}{9}+\dfrac{32}{3(n+2)})\dfrac{D}{tS^2}
 -\dfrac{32n}{3(n+2)}\dfrac{D^2}{t^2S^7|{\vec \varphi_{*}}|^2}.\\
\end{aligned}
\end{equation}
Defining
$$
z=\dfrac{D}{tS^5|{\vec \varphi_{*}}|^2}, 
$$
from the lemma \ref{lem:2.3}, we get 
\[
 0\le z\le\frac12,\qquad
\sum_kD_k^2\le\frac{n}{n+2}t S^4|{\vec \varphi_{*}}|^2(1-2z).
\]
From (\ref{eq:3.25}) and (\ref{eq:3.26}), we infer 
\begin{equation*}
\begin{aligned}
 &\dfrac{\mathcal R_{\frac12}}{S^3t|{\vec \varphi_{*}}|^2}
 \leq \frac{n}{n+2}\left(\frac{23}{20}+\frac{116}{5}z-\dfrac{32}{3t}z^2\right)
 -(\frac{32}{9t}+\frac{32}{3(n+2)t})z,
\end{aligned}
\end{equation*}
and
\begin{equation*}
\begin{aligned}
 &\dfrac{\mathcal R_{\frac14}}{S^3t|{\vec \varphi_{*}}|^2}
 \leq \frac{n}{n+2}\left(\frac{22}{25}+\frac{244}{15}z-\dfrac{32}{3t}z^2\right)
 -(\frac{20}{9t}+\frac{32}{3(n+2)t})z.
 \end{aligned}
\end{equation*}
Since $t\leq \dfrac13$, we have
\begin{equation}
\begin{aligned}
 &\dfrac{\mathcal R_{\frac12}}{S^3t|{\vec \varphi_{*}}|^2}
 \leq \frac{n}{n+2}\left(\frac{23}{20}+\frac{188}{15}z-32z^2\right)
 -\frac{160z}{3(n+2)},\\
\end{aligned}
\end{equation}
and
\begin{equation}
\begin{aligned}
 &\dfrac{\mathcal R_{\frac14}}{S^3t|{\vec \varphi_{*}}|^2}
 \leq \frac{n}{n+2}\left(\frac{22}{25}+\frac{48}{5}z-32z^2\right)
 -\frac{136z}{3(n+2)}.
\end{aligned}
\end{equation}
According to the identities
\begin{align*}
 \frac{23}{20}+\frac{188}{15}z-32z^2
 &=\frac{12}{5}-31\left(z-\frac15\right)^2
       -\left(z-\frac1{15}\right)^2-\frac1{180},\\
 \frac{22}{25}+\frac{48}{5}z-32z^2
 &=\frac85-32\left(z-\frac3{20}\right)^2
\end{align*}
and $n/(n+2)\le1$, we obtain
\begin{equation}
\mathcal R_{\frac12}\le\dfrac{12}5S^3t|{\vec \varphi_{*}}|^2, \quad 
\mathcal R_{\frac14}\le\dfrac{8}5S^3t|{\vec \varphi_{*}}|^2.
\end{equation}
 For $n\le12$ and  $u=1/2$, we can obtain a sharper upper bound of $\mathcal R_{1/2}$.  In fact, 
 \begin{equation}
\begin{aligned}
 &\dfrac{\mathcal R_{\frac12}}{S^3t|{\vec \varphi_{*}}|^2}
 \leq \frac{n}{n+2}\left(\frac{23}{20}+\frac{188}{15}z-32z^2\right)
 -\frac{160z}{3(n+2)},\\
\end{aligned}
\end{equation}
Since 
 $$
 \frac{n}{n+2}\left(\frac{23}{20}+\frac{188}{15}z-32z^2\right)
 -\frac{160z}{3(n+2)}
 $$
 is an increasing function of $n$
for  $0\le z\le1/2$, we have 
\[
 \dfrac{\mathcal R_{\frac12}}{S^3t|{\vec \varphi_{*}}|^2}
 \le\frac{69}{70}+\frac{104}{15}z-\frac{192}{7}z^2
 \le1+7z-\frac{49}{2}z^2\le\frac32.
\]
We have therefore proved
\[
 \mathcal R_{\frac12}\le
 \begin{cases}(3/2)S^3t|{\vec \varphi_{*}}|^2,&n\le12,\\(12/5)S^3t|{\vec \varphi_{*}}|^2,&n\ge4,\end{cases}
 \qquad \mathcal R_{\frac14}\le(8/5)S^3t|{\vec \varphi_{*}}|^2.
\]
These estimates also hold when $|\vec \varphi_*|=0$, as observed above.\\
For $u=\frac12$, we have
\begin{equation}\label{eq:3.31}
\begin{aligned}
 &\left(\frac{14}{5}\bigl(4+\frac{8u}{3}\bigr)-4u\right)\dfrac{f_3}SC  -\dfrac{15+4u}t \frac{C^2}{S^2}  
 -\dfrac{1}{S^2}(C-\frac{tS}{n}f_3)^2\\
 &=\frac{194}{15}\dfrac{f_3}SC -\dfrac{17}t \frac{C^2}{S^2}  
 -\dfrac{1}{S^2}(C-\frac{tS}{n}f_3)^2\\
 &= -\frac{17+t}{t}
 \biggl(\dfrac{C}{S}-\frac{t(\frac{97}{15}+\frac{t}{n})}{17+t} f_3\biggr)^2
 +t \biggl[\dfrac{(\frac{97}{15}+\frac{t}{n})^2}{17+t}-\dfrac{t}{n^2}\biggr] f_3^2\\
 &\leq t \biggl[\dfrac{(\frac{97}{15}+\frac{t}{n})^2}{17+t}-\dfrac{t}{n^2}\biggr] f_3^2.
\end{aligned}
\end{equation}
Thus, from (\ref{eq:3.31}) and definition of $\mathcal E_u$, we obtain
\begin{equation}\label{eq:3.32}
\begin{aligned}
\mathcal E_{\frac{1}{2}} &=\bigl(-\dfrac2t+\dfrac{16}3-\dfrac{2}n\bigr)tSf-\dfrac{32}{15}tf_3^2\\
&+ \frac{194}{15}\dfrac{f_3}SC -\dfrac{17}t \frac{C^2}{S^2}  
 -\dfrac{1}{S^2}(C-\frac{tS}{n}f_3)^2\\
&\leq  \left(-\frac{2}{t}+\frac{16}{3}-\frac{2}{n}\right)tSf
 +\left[-\frac{32}{15}-\frac{t}{n^2}
       +\frac{\left(\frac{97}{15}+\frac{t}{n}\right)^2}{17+t}\right]tf_3^2,\\
\end{aligned}
\end{equation}
\begin{equation}\label{eq:3.33}
\begin{aligned}
 \mathcal G_{\frac12}
 &=\bigl[(2+\frac{3}{8n}+\dfrac{3}{2n^2})t-1+\frac{1}{3n}\bigr]tS^3.
\end{aligned}
\end{equation}
For $u=\frac14$, we have
\begin{equation}\label{eq:3.34}
\begin{aligned}
 &\left(\frac{14}{5}\bigl(4+\frac{8u}{3}\bigr)-4u\right)\dfrac{f_3}SC  -\dfrac{15+4u}t \frac{C^2}{S^2}  
 -\dfrac{1}{S^2}(C-\frac{tS}{n}f_3)^2\\
 &=\frac{181}{15}\dfrac{f_3}SC -\dfrac{16}t \frac{C^2}{S^2}  
 -\dfrac{1}{S^2}(C-\frac{tS}{n}f_3)^2\\
 &= -\frac{16+t}{t}
 \biggl(\dfrac{C}{S}-\frac{t(\frac{181}{30}+\frac{t}{n})}{16+t} f_3\biggr)^2
 +t \biggl[\dfrac{(\frac{181}{30}+\frac{t}{n})^2}{16+t}-\dfrac{t}{n^2}\biggr] f_3^2\\
 &\leq t \biggl[\dfrac{(\frac{181}{30}+\frac{t}{n})^2}{16+t}-\dfrac{t}{n^2}\biggr] f_3^2.
\end{aligned}
\end{equation}
Thus, from (\ref{eq:3.34}), we obtain
\begin{equation}\label{eq:3.35}
\begin{aligned}
\mathcal E_{\frac{1}{4}}
 &=\bigl(-\frac{15}{8t}+\frac{14}{3}-\frac{1}{n}\bigr)tSf-\dfrac{28}{15}tf_3^2\\
&+\frac{181}{15}\dfrac{f_3}SC -\dfrac{16}t \frac{C^2}{S^2}  
 -\dfrac{1}{S^2}(C-\frac{tS}{n}f_3)^2\\
&\leq  \bigl(-\frac{15}{8t}+\frac{14}{3}-\frac{1}{n}\bigr)tSf
 +\left[-\frac{28}{15}-\frac{t}{n^2}
      + \dfrac{(\frac{181}{30}+\frac{t}{n})^2}{16+t}\right]tf_3^2,\\
\end{aligned}
\end{equation}
\begin{equation}\label{eq:3.36}
\begin{aligned}
 \mathcal G_{\frac14}&=\bigl[(2+\frac{3}{8n}+\dfrac{3}{2n^2})t-1+\frac{2}{3n}\bigr]tS^3.
\end{aligned}
\end{equation}
From (\ref{eq:3.32}),   (\ref{eq:3.33}),  (\ref{eq:3.35}) and  (\ref{eq:3.36}) we obtain
\begin{equation}
\begin{aligned}
0&\leq \mathcal R_{\frac12}+\mathcal E_{\frac12}+\mathcal G_{\frac12}\\
&\leq   \mathcal R_{\frac12}+\left(-\frac{2}{t}+\frac{16}{3}-\frac{2}{n}\right)tSf
 +\left[-\frac{32}{15}-\frac{t}{n^2}+\frac{\left(\frac{97}{15}+\frac{t}{n}\right)^2}{17+t}\right]tf_3^2\\
 &+\bigl[(2+\frac{3}{8n}+\dfrac{3}{2n^2})t-1+\frac{1}{3n}\bigr]tS^3
\end{aligned}
\end{equation}
For $4\leq n\leq 12$, we have 
\begin{equation}
\begin{aligned}
0&\leq \mathcal R_{\frac12}+\mathcal E_{\frac12}+\mathcal G_{\frac12}\\
&\leq  \left(-\frac{2}{t}+\frac{16}{3}-\frac{2}{n}\right)tSf+\biggl[-\frac{32}{15}-\frac{t}{n^2}
+\frac{\left(\frac{97}{15}+\frac{t}{n}\right)^2}{17+t}\biggr]tf_3^2\\
& +\bigl[(2+\frac{3}{8n}+\dfrac{3}{2n^2})t-1+\frac{1}{3n}\bigr]tS^3+ \dfrac32S^3t|{\vec \varphi_{*}}|^2
\end{aligned}
\end{equation}
and for $n\geq 4$, we have 
\begin{equation}
\begin{aligned}
0&\leq \mathcal R_{\frac12}+\mathcal E_{\frac12}+\mathcal G_{\frac12}\\
&\leq  \left(-\frac{2}{t}+\frac{16}{3}-\frac{2}{n}\right)tSf+\biggl[-\frac{32}{15}-\frac{t}{n^2}
+\frac{\left(\frac{97}{15}+\frac{t}{n}\right)^2}{17+t}\biggr]tf_3^2\\
& +\bigl[(2+\frac{3}{8n}+\dfrac{3}{2n^2})t-1+\frac{1}{3n}\bigr]tS^3+ \frac{12}5S^3t|{\vec \varphi_{*}}|^2
\end{aligned}
\end{equation}
and
\begin{equation}
\begin{aligned}
0&\leq \mathcal R_{\frac14}+\mathcal E_{\frac14}+\mathcal G_{\frac14}\\
&\leq   \bigl(-\frac{15}{8t}+\frac{14}{3}-\frac{1}{n}\bigr)tSf
 +\left[-\frac{28}{15}-\frac{t}{n^2}+ \dfrac{(\frac{181}{30}+\frac{t}{n})^2}{16+t}\right]tf_3^2\\
&+\bigl[(2+\frac{3}{8n}+\dfrac{3}{2n^2})t-1+\frac{2}{3n}\bigr]tS^3+\dfrac85S^3t|{\vec \varphi_{*}}|^2.
\end{aligned}
\end{equation}
\end{proof}

\begin{theorem}\label{thm:3.6}
Under assumptions of the theorem 1.1, suppose that $n<S\le n +\frac12n$.
For the limiting quantities obtained in \eqref{eq:3.8}, the following estimates hold.\\
if $n\le12$ or $|{\vec \varphi_{*}}|^2\le\frac{5}{9}|{\vec \varphi_{}}|^2=\frac{5}{9}\frac{f}{S^2}$, then
\begin{equation}\label{eq:necessary-half}
 0\le\frac{\dfrac{2f}{S^2}+\dfrac{f_3^2}{S^3}
-1}{9}+\frac{7}{36n},
\end{equation}
if $n\ge13$, then
\begin{equation}\label{eq:necessary-quarter}
 0\le\left(\frac{2}{9}-\frac{1}{3n}\right)\dfrac{f}{S^2}
 +\left(\frac{29}{240}+\frac{1}{12n}\right)\dfrac{f_3^2}{S^3}
 -\frac{1}{9}+\frac{5}{18n}.
\end{equation}
\end{theorem}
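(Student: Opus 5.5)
The plan is to divide the inequalities of Lemma~\ref{lem:3.5} by $tS^3>0$ and then make every coefficient $t$-free, using $0<t\le\frac13$. Set $x=f/S^2=|\vec\varphi|^2$ and $y=f_3^2/S^3$, so that $x,y\ge0$.

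\emph{The case $n\le12$ or $|\vec\varphi_*|^2\le\frac59 x$.} Use the first estimate of Lemma~\ref{lem:3.5} when $n\le12$, and estimate \eqref{eq:3.20} otherwise. This gives
\[
0\le\Bigl(-\frac2t+\frac{16}3-\frac2n\Bigr)x+c_{1/2}(t)\,y+\Bigl(2+\frac3{8n}+\frac3{2n^2}\Bigr)t-1+\frac1{3n}+\kappa\frac{|\vec\varphi_*|^2}{1},
\]
with $\kappa=\frac32$ or $\frac{12}5$, and with $c_{1/2}(t)$ the bracket multiplying $tf_3^2$.

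First I bound the $|\vec\varphi_*|^2$ term. If $n\ge13$ and $|\vec\varphi_*|^2\le\frac59x$, then $\frac{12}5|\vec\varphi_*|^2\le\frac43x$. If $n\le12$, then $\frac32|\vec\varphi_*|^2\le\frac32x$ by \eqref{eq:2.2}.

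Next, since $x\ge0$ and $t\le\frac13$, the $x$-coefficient is at most $-6+\frac{16}3-\frac2n+\frac32$. This should be at most $\frac29\cdot(\text{positive normaliser})$. To match the target \eqref{eq:necessary-half}, I will not simply plug in $t=\frac13$. Instead I will multiply the whole inequality by a suitable positive constant $\mu$, chosen so that the $x$-coefficient becomes exactly $\frac29$ or less, using $x\ge0$ to discard any excess. The same procedure is applied to $y$: compute $c_{1/2}(t)$. It is increasing in $t$, and at $t=\frac13$ it is roughly $-\frac{32}{15}+\frac{(97/15)^2}{17.33}\approx0.28$, which is positive. So $c_{1/2}(t)\le c_{1/2}(1/3)$, and after scaling its coefficient must be at most $\frac19$.

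The constant term $(2+\cdots)t-1+\frac1{3n}$ is increasing in $t$, so it is bounded at $t=\frac13$ by $-\frac13+\frac1{8n}+\frac1{2n^2}+\frac1{3n}$. This must be at most $\mu(-\frac19+\frac7{36n})$ after using $n\ge4$ to absorb the $n^{-2}$ term.

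I expect the main obstacle to be the choice of $\mu$. The three coefficients have to be dominated simultaneously, and each uses a different sign argument: for $x$ and $y$ this is the nonnegativity of the variables, and for the constant term it is that the target constant $-\frac19+\frac7{36n}$ is negative. So the target inequality only weakens things if each scaled coefficient is at most its target. Since the $x$-coefficient at $t=\frac13$ may be negative, the coefficient comparisons become easy, and the binding constraint is the constant term.

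Alternatively, and I expect this to be cleaner, I would use Lemma~\ref{lem:2.1} to trade between $x$ and $y$. By \eqref{eq:2.3}, $2x+\frac{n-1}{n-2}y\le1-\frac2n$. This lets me replace a negative multiple of $x$ by a multiple of $y$ minus a constant, so that the final expression takes the symmetric form $\frac{2x+y-1}9$. I would also check whether $t$ should instead be handled by keeping $-\frac2t$ as a large negative contribution to the $x$-coefficient.

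\emph{The case $n\ge13$.} Here I use \eqref{eq:3.21} with $u=\frac14$, together with $|\vec\varphi_*|^2\le x$. Dividing by $tS^3$ gives an $x$-coefficient $-\frac{15}{8t}+\frac{14}3-\frac1n+\frac85$, a $y$-coefficient $c_{1/4}(t)$, and a constant $(2+\cdots)t-1+\frac2{3n}$. I then bound these at $t=\frac13$ using the same monotonicity observations. This is combined with \eqref{eq:2.3} and a positive rescaling to land on \eqref{eq:necessary-quarter}. The coefficients $\frac29-\frac1{3n}$ and $\frac{29}{240}+\frac1{12n}$ suggest that a multiple of \eqref{eq:2.3} (whose $n$-dependence produces those $\frac1n$ terms) is added. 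Determining that multiple is the concrete computation I would carry out first.
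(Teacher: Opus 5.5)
\textbf{There is a genuine gap} at the step where you make the coefficients $t$-free one at a time. Your claim that $c_{1/2}(t)$ is increasing in $t$ is false. For constants $a,b$,
\[
\frac{d}{dt}\Bigl(-\frac{t}{n^2}+\frac{(a+t/n)^2}{b+t}\Bigr)=-\frac{(a-b/n)^2}{(b+t)^2}\le 0 .
\]
So both $y$-coefficients $c_{1/2}(t)$ and $c_{1/4}(t)$ are decreasing, and the bound $c_u(t)\le c_u(1/3)$ fails for $t<\frac13$.

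If you bound them correctly, by their values as $t\to0^+$, the two cases behave differently.
\begin{itemize}
\item For $u=\frac12$ you get $c_{1/2}(0^+)=\frac{1249}{3825}<\frac13$. So \eqref{eq:necessary-half} still follows with $\mu=\frac13$, but only by luck.
\item For $u=\frac14$ you get $c_{1/4}(0^+)=\frac{5881}{14400}$. Then $\frac13c_{1/4}(0^+)\approx0.136$, which exceeds the required $\frac{29}{240}+\frac1{12n}\le0.128$ for $n\ge13$.
\end{itemize}
No rescaling $\mu$ and no multiple of \eqref{eq:2.3} can repair the second case. For $n=13$, the point $x=0$, $y=0.68$ satisfies $x,y\ge0$, \eqref{eq:2.3}, and the inequality your termwise bounds produce. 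Yet it violates \eqref{eq:necessary-quarter}. Also, the $\frac1n$ terms in \eqref{eq:necessary-quarter} do not come from adding \eqref{eq:2.3}. That inequality is used only afterwards, in Section 4.

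\textbf{The missing idea is joint monotonicity in $t$.} The paper freezes $x=f/S^2$ and $y=f_3^2/S^3$. It treats the bound of Lemma~\ref{lem:3.5}, divided by $tS^3$, as a single function $\mathcal N_u(s)$, and shows
\[
\mathcal N_{1/2}'(s)=\frac{2}{s^2}x+2+\frac{3}{8n}+\frac{3}{2n^2}-\frac{(\frac{97}{15}-\frac{17}{n})^2}{(17+s)^2}\,y>0 .
\]
The same holds for $\mathcal N_{1/4}$ with $(\frac{181}{30},16)$ in place of $(\frac{97}{15},17)$. The reason is that $y\le1$ and $0<a-\frac bn<7<b$ make the negative term smaller than $1$, while the constant term has slope at least $2$. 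So the loss from the decreasing $y$-coefficient is paid for by the constant term, and all coefficients may be evaluated at $s=\frac13$ simultaneously.

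Multiplying $0\le\mathcal N_u(\frac13)+\kappa|\vec\varphi_*|^2$ by $\frac13$ gives the paper's intermediate inequalities with $y$-coefficients $\frac1{52}(\frac{97}{15}+\frac1{3n})^2-\frac{32}{45}-\frac1{9n^2}$ and $\frac1{49}(\frac{181}{30}+\frac1{3n})^2-\frac{28}{45}-\frac1{9n^2}$. The numerical bounds on these and on the constants, together with $|\vec\varphi_*|^2\le x$ (resp.\ $\le\frac59x$), then finish.

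Note also that in the $n\le12$ case the binding constraint is not the constant term, contrary to your expectation. It is the $x$-coefficient $\frac13\bigl(\frac56-\frac2n\bigr)\le\frac29$, which is positive, and this is exactly where the restriction $n\le12$ comes from.
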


\begin{proof}
Keeping $f/S^2$ and $f_3^2/S^3$ fixed at their limiting values, we consider
functions $\mathcal N_{\frac12}(s)$ and $\mathcal N_{\frac14}(s)$ of $s>0$ defined by
 \begin{align*}
 \mathcal N_{\frac{1}{2}}(s)={}&
 \left(-\frac{2}{s}+\frac{16}{3}-\frac{2}{n}\right)\dfrac{f}{S^2}
 +\left[-\frac{32}{15}-\frac{s}{n^2}
       +\frac{\left(\frac{97}{15}+\frac{s}{n}\right)^2}{17+s}\right]\dfrac{f_3^2}{S^3}\\
 &+\left(2+\frac{3}{8n}+\frac{3}{2n^2}\right)s-1+\frac{1}{3n},\\
 \mathcal N_{\frac{1}{4}}(s)={}&
 \left(-\frac{15}{8s}+\frac{14}{3}-\frac{1}{n}\right)\dfrac{f}{S^2}
 +\left[-\frac{28}{15}-\frac{s}{n^2}
       +\frac{\left(\frac{181}{30}+\frac{s}{n}\right)^2}{16+s}\right]\dfrac{f_3^2}{S^3}\\
 &+\left(2+\frac{3}{8n}+\frac{3}{2n^2}\right)s-1+\frac{2}{3n}.
\end{align*}
Since 
\[
 \frac{d}{ds}\left(-\frac{s}{n^2}
             +\frac{(a+s/n)^2}{b+s}\right)
 =-\frac{(a-b/n)^2}{(b+s)^2},
\]
we obtain
\begin{align*}
 \mathcal N'_{\frac{1}{2}}(s)
 &=\frac{2}{s^2}\dfrac{f}{S^2}+2+\frac{3}{8n}+\frac{3}{2n^2}
       -\frac{(\frac{97}{15}-\frac{17}{n})^2}{(17+s)^2}\dfrac{f_3^2}{S^3}>0,\\
 \mathcal N'_{\frac{1}{4}}(s)
 &=\frac{15}{8s^2}\dfrac{f}{S^2}+2+\frac{3}{8n}+\frac{3}{2n^2}
       -\frac{(\frac{181}{30}-\frac{16}{n})^2}{(16+s)^2}\dfrac{f_3^2}{S^3}>0.
\end{align*}
Indeed, $0<\frac{97}{15}-\frac{17}{n}<7$ and
$0<\frac{181}{30}-\frac{16}{n}<7$, while $\dfrac{f_3^2}{S^3}\le1$.
Each negative term in these derivatives has absolute value less than
one, whereas the positive constant term is at least two. \\
Thus both
$\mathcal N_{\frac12}(s)$ and  $\mathcal N_{\frac14}(s)$ are strictly increasing on $0<s\le1/3$.
From the lemma \ref{lem:3.5}, we have, for $4\leq n\leq 12$, 
\begin{equation}
0\leq \mathcal N_{\frac12}(t)+\frac{3}2|{\vec  \varphi_{*}}|^2\leq \mathcal N_{\frac12}({\frac13})+\frac{3}2|{\vec \varphi_{*}}|^2,
\end{equation}
for $n\geq 4$, 
\begin{equation}
0\leq \mathcal N_{\frac12}(t)+\frac{12}5|{\vec  \varphi_{*}}|^2\leq \mathcal N_{\frac12}({\frac13})+\frac{12}5|{\vec \varphi_{*}}|^2,
\end{equation}
\begin{equation}
0\leq \mathcal N_{\frac14}(t)+\frac{8}5|{\vec \varphi_{*}}|^2\leq \mathcal N_{\frac14}({\frac13})+\frac{8}5|{\vec \varphi_{*}}|^2,
\end{equation}
that is, 
for $4\leq n\leq 12$,
\begin{equation}\label{eq:3.46}
\begin{aligned}
0&\le{}\left(-\frac29-\frac{2}{3n}\right)\dfrac{f}{S^2}
 +\left(\frac{1}{52}\left(\frac{97}{15}+\frac{1}{3n}\right)^2-\frac{32}{45}-\frac{1}{9n^2}\right)\dfrac{f_3^2}{S^3}\\
 &-\frac19+\frac{11}{72n}+\frac{1}{6n^2}+\frac12|{\vec \varphi_{*}}|^2,
\end{aligned}
\end{equation}
for $n\geq 4$,
\begin{equation}\label{eq:3.47}
\begin{aligned}
0&\le{}\left(-\frac29-\frac{2}{3n}\right)\dfrac{f}{S^2}
 +\left(\frac{1}{52}\left(\frac{97}{15}+\frac{1}{3n}\right)^2-\frac{32}{45}-\frac{1}{9n^2}\right)\dfrac{f_3^2}{S^3}\\
 &-\frac19+\frac{11}{72n}+\frac{1}{6n^2}+\frac45|{\vec \varphi_{*}}|^2,
 \end{aligned}
 \end{equation}
\begin{equation}\label{eq:3.48}
\begin{aligned}
0&\le{}\left(-\frac{23}{72}-\frac1{3n}\right)\dfrac{f}{S^2}
 +\left(\frac{1}{49}\left(\frac{181}{30}+\frac{1}{3n}\right)^2-\frac{28}{45}-\frac{1}{9n^2}\right)\dfrac{f_3^2}{S^3}\\
  &-\frac19+\frac{19}{72n}+\frac{1}{6n^2}+\frac8{15}|{\vec \varphi_{*}}|^2.
\end{aligned}
\end{equation}
From 
$$ 
\frac{121}{1300}<\frac{3}{32}, \quad \frac{97}{1170}<\frac{1}{12}, \quad \frac{17}{156}>\frac{1}{12},
$$
and 
$$
\frac{5321}{44100}<\frac{29}{240}, \quad \frac{181}{2205}<\frac{1}{12},
$$
we infer
\begin{equation}\label{eq:3.49}
\begin{aligned}
 \frac{1}{52}\left(\frac{97}{15}+\frac{1}{3n}\right)^2
       -\frac{32}{45}-\frac{1}{9n^2}
 &=\frac{121}{1300}+\frac{97}{1170n}-\frac{17}{156n^2}
 <\frac{1}{9},\\
 \frac{1}{49}\left(\frac{181}{30}+\frac{1}{3n}\right)^2
       -\frac{28}{45}-\frac{1}{9n^2}
 &=\frac{5321}{44100}+\frac{181}{2205n}-\frac{16}{147n^2}
 <\frac{29}{240}+\frac{1}{12n}.
\end{aligned}
\end{equation}
For $ n\geq 4$, we have 
\begin{equation}\label{eq:3.50}
 -\frac{1}{9}+\frac{11}{72n}+\frac{1}{6n^2}
 \le-\frac{1}{9}+\frac{7}{36n}.
\end{equation}
 For $n\geq 13$,  we have 
\begin{equation}\label{eq:3.51}
 -\frac{1}{9}+\frac{19}{72n}+\frac{1}{6n^2}
 \le-\frac{1}{9}+\frac{5}{18n}.
\end{equation}
For  $n\le12$,  according to (\ref{eq:2.2}), we have 
$$
|{\vec \varphi_{*}}|^2\leq |{\vec \varphi_{}}|^2=\dfrac{f}{S^2}.
$$
Since 
$-\frac{2}{9}-\frac{2}{3n}+\frac{1}{2}\le\frac{2}{9}$,
(\ref{eq:3.46}), (\ref{eq:3.49}) and (\ref{eq:3.50}) yield
$$
0\leq \dfrac29\dfrac{f}{S^2}+\dfrac19\dfrac{f_3^2}{S^3}-\frac{1}{9}+\frac{7}{36n}.
$$
If $|{\vec \varphi_{*}}|^2\leq \dfrac59|{\vec \varphi_{}}|^2=\dfrac59\dfrac{f}{S^2}$, 
since 
\[
 \left(-\frac29-\frac{2}{3n}\right)\dfrac{f}{S^2}+\frac45|{\vec \varphi_{*}}|^2
 \le\left(\frac29-\frac{2}{3n}\right)\dfrac{f}{S^2}\le\frac29\dfrac{f}{S^2},
\]
(\ref{eq:3.47}), (\ref{eq:3.49}) and (\ref{eq:3.50}) yield
$$
0\leq \dfrac29\dfrac{f}{S^2}+\dfrac19\dfrac{f_3^2}{S^3}-\frac{1}{9}+\frac{7}{36n}.
$$
If $n\geq 13$, 

$$
|{\vec \varphi_{*}}|^2\leq |{\vec \varphi_{}}|^2=\dfrac{f}{S^2}.
$$
yields
\[
 \left(-\frac{23}{72}-\frac{1}{3n}\right)\dfrac{f}{S^2}+\frac{8}{15}|{\vec \varphi_{*}}|^2
 \le\left(\frac{2}{9}-\frac{1}{3n}\right)\dfrac{f}{S^2}.
\]
In view of (\ref{eq:3.48}), (\ref{eq:3.49}), (\ref{eq:3.51}) and the above inequality, we conclude
 \eqref{eq:necessary-quarter}.
\end{proof}

\section{Proof of Theorem 1.1}

\begin{proof}[Proof of Theorem 1.1]
Assume $n<S\leq n+\frac12n$, so $0<t\leq\frac13$.
We use the limiting quantities obtained in Section 3.
If 
\begin{equation*}
2Sf+(\dfrac{n-1}{n-2}+\dfrac19)f^2_3> (1-\dfrac2n)S^3,
\end{equation*}
the lemma 2.1 yields
 $|{\bf \varphi_{*}}|^2\leq \dfrac59|{\bf \varphi_{}}|^2=\dfrac59\dfrac{f}{S^2}$.
 Hence, if  $n\le12$ or the preceding strict inequality holds, we have,  from the theorem 3.6 and the first inequality in the lemma 2.1,  
 \begin{equation}\label{eq:final-contradiction}
 0\le\frac{\dfrac{2f}{S^2}+\dfrac{f_3^2}{S^3}-1}{9}+\frac{7}{36n}\leq \dfrac19(-\frac{f_3^2}{(n-2)S^3}-\frac2n+\frac7{4n})\leq -\dfrac1{36n}<0.
\end{equation}
This is impossible.\\
It remains to consider $n\ge13$ and 
\begin{equation}
2Sf+(\dfrac{n-1}{n-2}+\dfrac19)f^2_3\leq  (1-\dfrac2n)S^3.
\end{equation}
We use
\[
 \frac{29}{240}+\frac{1}{12n}
 \le\left(\frac{1}{9}-\frac{1}{15n}\right)
       \left(\frac{n-1}{n-2}+\frac{1}{9}\right),
\]
since the difference between the right and left sides is
\[
 \frac{(n-12)(17n-130)+48}{6480n(n-2)}>0\qquad(n\ge13).
\]
Equation~\eqref{eq:necessary-quarter} in Theorem~\ref{thm:3.6} gives, for $n\geq13$,
\begin{equation*}
 0\le\left(\frac{2}{9}-\frac{1}{3n}\right)\dfrac{f}{S^2}
 +\left(\frac{29}{240}+\frac{1}{12n}\right)\dfrac{f_3^2}{S^3}
 -\frac{1}{9}+\frac{5}{18n}.
\end{equation*}
From  $\frac{2}{9}-\frac{1}{3n}\le2(\frac{1}{9}-\frac{1}{15n})$,
we obtain, from the assumption of this case, 
\begin{equation}
\begin{aligned}
 0&\le\left(\frac{2}{9}-\frac{1}{3n}\right)\dfrac{f}{S^2}
 +\left(\frac{29}{240}+\frac{1}{12n}\right)\dfrac{f_3^2}{S^3}
 -\frac{1}{9}+\frac{5}{18n}\\
 &\leq (\frac{1}{9}-\frac{1}{15n})\biggl[2\dfrac{f}{S^2}+(\frac{n-1}{n-2}+\frac{1}{9})\dfrac{f_3^2}{S^3}\biggr] -\frac{1}{9}+\frac{5}{18n}\\
 &\leq  (\frac{1}{9}-\frac{1}{15n})(1-\frac2n)-\frac{1}{9}+\frac{5}{18n}=\frac{12-n}{90n^2}<0.
 \end{aligned}
\end{equation}
Both cases are impossible. Therefore $S>n+\frac{1}{2}n$.
\end{proof}

\end{document}